\newtheorem{thrm}{Theorem}[section]
\newtheorem{lem}[thrm]{Lemma}
\newtheorem{cor}[thrm]{Corollary}
\theoremstyle{definition}
\newtheorem{remark}[thrm]{Remark}
\numberwithin{equation}{section}
\author{Zhiqiang Li}
\address{Research Center for Operator Algebras, East China Normal University, Shanghai, China \ 200062}
\address{Chongqing University, Chongqing, China \ 401331}
\email{zqli@cqu.edu.cn}
\author{Wenda zhang}
\address{College of Mathematics and Statistics, Chongqing Jiaotong University, Chongqing, China \ 401224}
\email{wendazhang951@aliyun.com}
\keywords{C*-dynamical system, Classification}
\subjclass[2000]{Primary 46L35, Secondary 46L55}
\begin{document}

\newcommand{\li}{\lim\limits_{\longrightarrow}}
\newcommand{\Ca}{$\textrm{C}^*$-algebra}
\newcommand{\il}{inductive limit}
\newcommand{\C}{$\textrm{C}^*$}
\newcommand{\K}{$\textrm{K}$}
\newcommand{\KK}{$\textrm{K}\!\textrm{K}$}
\newcommand{\I}{$\textrm{I}$}
\newcommand{\M}{\operatorname{M}}

\title[AF systems of certain cyclic groups]{AF systems of certain cyclic groups}

\begin{abstract}
In this paper, we give a \K-theoretic classification for the \C-dynamical systems
 $\lim\limits_{\longrightarrow}(A_n,\alpha_n,G)$, where each $A_n$ is a finite
 dimensional C*-algebra, and $G$ is a cyclic group of prime order. Consequently, all inductive limit actions of such cyclic groups on AF algebras are classified. Such actions contain natural examples of finite group actions on UHF algebras which do not have the tracial Rokhlin property (see \cite{Phi}).
\end{abstract}
\maketitle

%\tableofcontents
\section{Introduction} \label{sect1}
A number of results concerning the classification of \C-algebras
have been obtained under the Elliott programme. However,
classification of group actions on \C-algebras is still a far less
developed subject, partially because of \K-theoretic difficulties.
Among all the group actions on C*-algebras, there are typical ones ,
i.e., inductive limit group actions, in other words, group actions
are compatible with the inductive limit structuresa of the
C*-algebras. In such a setting, it is possible to classify them
using the equivariant version of Elliott's intertwining argument.

Given a compact group $G$, let $A=\lim\limits_{\longrightarrow}A_n$
be the inductive limit of a sequence of finite dimensional
\C-algebras, let $\alpha=\lim\limits_{\longrightarrow}\alpha_n$ be
an inductive limit action of $G$ on $A$. Then one can form the
C$^*$-algebra cross product $A\rtimes_{\alpha}G=\li
A_n\rtimes_{\alpha_n}G$. If each $\alpha_n$ is given by an inner
automorphism arisen from a unitary representation of the group $G$,
then it was shown in \cite{HR} that the natural \K-theory data of
$A\rtimes_{\alpha}G$ is a complete invariant for the \C-dynamical
system $(A,\alpha,G)$. Such actions are referred to as locally
representable actions. In the case that $A$ is unital, the \K-theory
data in \cite{HR} consists of the \K-group
$\K_0(A\rtimes_{\alpha}G)$ together with (i) the natural order
structure, (ii) the special element coming from the projection given
by averaging the canonical unitaries of the cross product, (iii) the
natural module structure over the representation ring $\K_0(G)$. In
\cite{K}, Kishimoto considered locally representable actions of
finite groups on inductive limit algebras with more complicated
building blocks (circles), and in \cite{BEEK}, this study was
extended to still more complicated inductive limit systems and to
general compact groups, but still requiring local representability.

So it is interesting to consider the case beyond locally
representable actions. Along this line, in \cite{ES}, G. A. Elliott
and H. Su removed this local representability hypothesis in the case
where the group is $Z/2Z$ and the building blocks are finite
dimensional. In \cite{Su}, this local representability condition was
also removed, where the group is still $Z/2Z$, but the inductive
limits are certain real rank zero systems built on some
subhomogeneous graph \Ca s. Then, it is a natural question to ask to
which extent one can obtain a K-theoretic classification for more
general group actions on C*-algebras. Conceivably, actions of finite
abelian groups will be a quite large class. To classify these
actions, from the viewpoint of group structure theory, the $p$
(prime) groups (groups with order being some power of $p$) case will
be fundamental, and among them, the cyclic groups with prime orders
should be the first test case. In the present paper, a \K-theoretic
classification for inductive limit actions of such cyclic groups
on AF (approximately finite dimensional) algebras is obtained.

On the other hand, there is another class of group actions on
C*-algebras which draw many people's attention, namely, the group
actions with the (tracial) Rokhlin property. For the discrete groups
$Z$ and $Z^d$, I. Hirshberg, W. Winter, and J. Zacharias (in
\cite{HWZ}) and N. C. Phillips (in \cite{Phi1}) showed that the
actions with the tracial Rokhlin property are generic for nice
C*-algebras (for example, tracially AF algebras). For finite group
action case, M. Izumi showed that there are serious obstructions for
C*-algebras admitting finite group actions with the Rokhlin
property, (see \cite{izumi1}, \cite{izumi2}). He showed that for a
simple unital C*-algebra $A$, if either K$_0(A)$ or K$_1(A)$ is
isomorphic to $\mathbb{Z}$, then there is no non-trivial finite
group action with the Rokhlin property on $A$ (see Theorem 3.3 and
Theorem 3.6 in \cite{izumi2}). In fact, there are natural examples
of inductive limit actions of cyclic groups on UHF algebras which do
not have the (tracial) Rokhlin property (see \cite{Phi}).  We quote
the example of $Z/2Z$ action here:
$$A=\bigotimes\limits_{n=1}^{\infty}\textrm{M}_{2^n}, \;\alpha=\bigotimes\limits_{n=1}^{\infty}\alpha_n,
\;\alpha_n=Ad\left(
                                                                    \begin{array}{cc}
                                                                      1_{2^{n}-1} & 0 \\
                                                                      0 & -1 \\
                                                                    \end{array}
                                                                  \right),
$$ the distribution of the eigenvalues of the unitaries indicate that this action does not have the tracial Rokhlin property (see \textbf{Example 2.9} in \cite{Phi} in detail). Since such
examples have inductive limit structure, they sit in our
classifiable classes.

Throughout this paper, let us denote the group $Z/pZ$ by $Z_p$,
where $p$ is a prime. We use both $id$ and $I$ to denote the
identity matrix.

To state the invariant, let $A$ be a unital \Ca, and let $\alpha$ be
a group action of $Z_p$ on $A$. The invariants we need are as
follows:

(1)$\,(\K_0(A),\, \K_0(A)^+,[1_A],\, \alpha_*)$,

(2)$\,(\K_0(A\rtimes_\alpha Z_p),\,\K_0(A\rtimes_\alpha
Z_p)^+,\,\zeta,\, \hat{\alpha}_*)$, where $\zeta$ is the special
element in $\K_0(A\rtimes_\alpha Z_p)$ and $\hat{\alpha}$ is the
dual action of $\hat{Z}_p$ on $A\rtimes_\alpha Z_p$,

(3)\;$\iota_{*}:$ $\K_0(A)\rightarrow \;\K_0(A\rtimes_\alpha Z_p)$,
where $\iota$ is the canonical embedding of $A$ into
$A\rtimes_\alpha Z_p$.

\noindent $(1)$ and $(3)$ are necessary, since the action may not be
inner, the information in $\K_0(A)$ may not be recovered completely
from $\K_0(A\rtimes_\alpha Z_p)$, we must adjoin this, as well as
the actions on the \K-groups, to the invariant. We state the main
theorem here.
\begin{thrm}\label{main} Let $(A,\alpha,Z_p)=\li (A_n,\alpha_n,Z_p)$ and
$(B,\beta,Z_p)=\li (B_n,\beta_n,Z_p)$ be two approximately finite
dimensional \il\;\C-dynamical systems, let $F$ be a scaled order
preserving group isomorphism from $(\K_0(A),\alpha_*)$ to
$(\K_0(B),\beta_*)$, and let $\phi$ be an order preserving group
isomorphism from $(\K_0(A\rtimes_\alpha Z_p),\hat{\alpha}_*)$ to
$(\K_0(B\rtimes_\beta Z_p),\hat{\beta}_*)$ mapping the special
element to the special element. Suppose that the following diagram
commutes:
$$
\xymatrix{ K_0(A) \ar[r] \ar[d]_F& K_0(A\rtimes_\alpha Z_p) \ar[d]^{\phi} \\
K_0(B) \ar[r] & K_0(B\rtimes_\beta Z_p).}
$$
Then there is an isomorphism $\psi$ from $(A,\alpha,Z_p)$ to
$(B,\beta,Z_p)$ such that $\psi_*=F$ and such that the extension of
$\psi$ to $A\rtimes_\alpha Z_p$ induces $\phi$.
\end{thrm}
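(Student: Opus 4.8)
The plan is to run an equivariant version of Elliott's approximate intertwining argument, whose two technical pillars are a \emph{uniqueness} theorem and an \emph{existence} (lifting) theorem at the level of the finite-dimensional building blocks, resting on a structural analysis of $Z_p$-actions on finite-dimensional \Ca{}s. So I would begin by putting such an action $\gamma$ on $D=\bigoplus_i M_{k_i}$ into normal form: $\gamma$ permutes the simple summands, and since $p$ is prime every orbit has length $1$ or $p$; on a length-$p$ orbit the action is the cyclic shift of $M_k^{\oplus p}$, while on a fixed summand it is inner, of the form $\mathrm{Ad}(u)$ with $u^p$ central, so that $u$ may be taken to be a unitary representation of $Z_p$ and is determined up to conjugacy by the multiplicities $(m_0,\dots,m_{p-1})$ of the $p$ characters. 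Since $D$ and $Z_p$ are finite, $D\rtimes_\gamma Z_p$ is again finite dimensional; in the inductive limit this means $A\rtimes_\alpha Z_p=\li(A_n\rtimes_{\alpha_n}Z_p)$ is AF and $\hat\alpha$ is again an inductive limit action of the cyclic group $\hat{Z}_p$. Moreover $K_0(D\rtimes_\gamma Z_p)$, together with its order, the special element $\zeta$, the map $\iota_*$ from $K_0(D)$, and the dual action $\hat\gamma_*$, is explicitly computable from the orbit data and the character multiplicities, and I would show that this package is a complete invariant of the equivariant isomorphism class of $(D,\gamma,Z_p)$.

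The uniqueness theorem I need is the relative statement: an equivariant $*$-homomorphism $\mu\colon(D,\gamma)\to(E,\epsilon)$ between two finite-dimensional building blocks is determined, up to conjugation by an $\epsilon$-invariant unitary of $E$, by the pair of maps it induces, namely $\mu_*$ on $K_0$ and the map on $K_0$ of the crossed products induced by its canonical extension $\mu\rtimes Z_p$. The mechanism is that these two maps between finite-dimensional $K_0$-groups record precisely the ``character-multiplicity matrix'' of $\mu$ on the fixed summands together with the combinatorial data of how $\mu$ treats the length-$p$ orbits, and two equivariant homomorphisms carrying the same such data are intertwined by an $\epsilon$-invariant unitary by elementary finite-dimensional arguments.

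For existence, given a finite-dimensional $(D,\gamma,Z_p)$, the limit $(B,\beta,Z_p)$, and a morphism of the invariant from $(D,\gamma)$ into $(B,\beta)$ — that is, order-preserving maps $K_0(D)\to K_0(B)$ and $K_0(D\rtimes_\gamma Z_p)\to K_0(B\rtimes_\beta Z_p)$ compatible with $\gamma_*$ and $\beta_*$, with $\hat\gamma_*$ and $\hat\beta_*$, with the scale and the special element, and with the $\iota_*$-squares — I would produce an $m$ and an equivariant $*$-homomorphism $\mu\colon D\to B_m$ realising this morphism after composition with the canonical maps into the limits. The content is that an abstract morphism of the invariant is necessarily realisable: the order, the element $\zeta$, and $\iota_*$ force the prospective character multiplicities to be non-negative integers with the correct sums, and the orbit data can then be matched. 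I expect this to be the main obstacle, exactly as in the $Z/2Z$ case of Elliott and Su: one has to rule out the \K-theoretic obstructions to lifting while keeping the bookkeeping of the $p$ characters and of the length-$1$ versus length-$p$ orbits consistent on the algebra side and on the crossed-product side simultaneously.

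With uniqueness and existence in hand the remainder is the standard two-sided telescope. Starting from $A_1$ one alternately applies existence to obtain equivariant homomorphisms
$$
A_1 \xrightarrow{\ \mu_1\ } B_{m_1} \xrightarrow{\ \nu_1\ } A_{n_2} \xrightarrow{\ \mu_2\ } B_{m_2} \xrightarrow{\ \nu_2\ } \cdots
$$
whose induced maps on $K_0$ and on $K_0$ of the crossed products are compatible with $F,\phi$ and with $F^{-1},\phi^{-1}$ through the canonical maps into the limits, and then uses uniqueness to adjust the $\nu_k$ and $\mu_{k+1}$ by $Z_p$-invariant unitaries so that the resulting diagram of equivariant homomorphisms commutes to within prescribed tolerances (for AF algebras the relevant triangles can in fact be made to commute exactly on $K_0$). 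Since every correcting unitary is $Z_p$-invariant, its action extends to the crossed product, so the corresponding diagram of crossed products commutes to the same tolerances. Passing to inductive limits yields mutually inverse equivariant isomorphisms between $(A,\alpha,Z_p)$ and $(B,\beta,Z_p)$; this is the desired $\psi$, and $\psi_*=F$ by construction, while the extension of $\psi$ to $A\rtimes_\alpha Z_p$, being the inductive limit of the $\mu_k\rtimes Z_p$ and $\nu_k\rtimes Z_p$, induces $\phi$. The commuting-square hypothesis of the theorem is exactly what permits the two intertwinings to be run in lockstep.
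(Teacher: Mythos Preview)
Your proposal is correct and follows essentially the same route as the paper: a normal-form analysis of $Z_p$-actions on finite-dimensional \Ca{}s into the two irreducible types (inner on a single summand, cyclic shift on $p$ copies), an existence theorem lifting invariant-level morphisms to equivariant $*$-homomorphisms between building blocks, a uniqueness theorem up to conjugation by an invariant unitary, and then Elliott's intertwining. The paper organises the existence and uniqueness proofs as a four-case analysis according to the types of the source and target, and in the AF setting runs the intertwining exactly rather than approximately, as you yourself note parenthetically.
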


The paper is organized as follows. In Section 2, some preliminaries
are given about the irreducible actions of $Z_p$ on finite
dimensional C*-algebras. In Section 3, a local existence result is
proved, namely, morphisms between the invariant of the finite
dimensional \C-dynamical systems can be lifted to morphisms between
the finite dimensional \C-dynamical systems. In Section 4, a local
uniqueness result is obtained, namely, for any two morphisms between
the finite dimensional \C-dynamical systems, if their induced maps
agree on the invariant, then they are unitarily equivalent by an
equivariant unitary, i.e., a unitary in the fixed point subalgebra
of the codomain algebra. These two results are the main ingredients
in the equivariant Elliott's intertwining argument. In Section 5,
the main theorem will be proved by the equivariant Elliott's
intertwining argument.

\section{preliminaries} \label{sect2}
Let $A=\bigoplus\limits_{k=1}^{m}\M_{n_k}$ be a finite dimensional
\Ca, and  let $\alpha$ be a group action of $Z_p$ on $A$. Since
$Z_p$ is cyclic, then $\alpha$ is determined by the corresponding
automorphism $\rho$ of the generator of $Z_p$. From basic
representation theory, $\alpha$ can be decomposed into a finite
direct sum of irreducible actions. Each irreducible action has the
form either $(\M_n,\rho)$ or
$(\underbrace{\textrm{M}_n\oplus...\oplus \textrm{M}_n}_{p},\rho)$.
Let us prepare all the \K-theoretic information about the
irreducible actions.

In the case $(\M_n,\rho)$, $\rho$ is given by a unitary $V\in \M_n$:
$$\rho(a)=VaV^*,\;a\in \textrm{M}_n,$$ where $V$ satisfies $V^p=I$
and could be chosen to be diagonal.

\begin{lem} $M_n\rtimes_{\alpha}Z_p$ is isomorphic to
$\underbrace{M_n\oplus...\oplus M_n}_{p}$.
\end{lem}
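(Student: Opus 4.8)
The plan is to use that $\rho=\mathrm{Ad}(V)$ is implemented by a genuine unitary representation of $Z_p$ — namely $k\mapsto V^k$, which makes sense since $V^p=1$ — so that the crossed product carries no cocycle twist and splits off a copy of $C^*(Z_p)\cong\mathbb{C}^p$. Concretely, recall that $\M_n\rtimes_\alpha Z_p$ is generated by a copy of $\M_n$ together with a unitary $u$ with $u^p=1$ implementing the generating automorphism, $uau^*=VaV^*$ for $a\in\M_n$, and that (the group being finite and $\M_n$ finite dimensional) one has, as vector spaces, $\M_n\rtimes_\alpha Z_p=\bigoplus_{k=0}^{p-1}\M_n u^k$.

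First I would introduce the unitary $w:=V^{*}u\in\M_n\rtimes_\alpha Z_p$ (equivalently $w=uV^{*}$, since $\rho(V)=V$ makes $u$ and $V$ commute) and record two facts. (i) $w$ commutes with $\M_n$: for $a\in\M_n$, $waw^{*}=u(V^{*}aV)u^{*}=u\rho^{-1}(a)u^{*}=a$. (ii) $w^{k}=V^{-k}u^{k}$ for all $k$, because $\rho$ fixes $V$; in particular $w^{p}=V^{-p}u^{p}=1$. Hence $C^{*}(w)$ is a commutative $C^{*}$-algebra generated by a unitary of order dividing $p$, so it is a quotient of $C^{*}(Z_p)\cong\mathbb{C}^{p}$. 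The key point is that it is all of $\mathbb{C}^{p}$: by (ii) the elements $w^{0},w^{1},\dots,w^{p-1}$ lie in the distinct direct summands $\M_n u^{0},\M_n u^{1},\dots,\M_n u^{p-1}$ and are each invertible, hence nonzero, so they are linearly independent; together with $w^{p}=1$ this forces $\dim C^{*}(w)=p$, i.e. $C^{*}(w)\cong\mathbb{C}^{p}$ with $w$ corresponding to the element whose coordinates are the $p$ distinct $p$-th roots of unity.

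Now let $\omega=e^{2\pi i/p}$ and let $e_j=\frac{1}{p}\sum_{k=0}^{p-1}\omega^{-jk}w^{k}$, $j=0,\dots,p-1$, be the minimal projections of $C^{*}(w)$; these are $p$ nonzero mutually orthogonal projections summing to $1$, and $we_j=\omega^{j}e_j$. Since $w$ commutes with $\M_n$, each $e_j$ is central in $\M_n\rtimes_\alpha Z_p$: it commutes with $\M_n$ and with $u=Vw$ (as it commutes with $V\in\M_n$ and with $w$). Therefore $\M_n\rtimes_\alpha Z_p=\bigoplus_{j=0}^{p-1}e_j(\M_n\rtimes_\alpha Z_p)$. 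Because the crossed product is generated by $\M_n$ and $w$ (note $u=Vw$), and multiplying a word in $\M_n$ and $w$ by $e_j$ replaces every factor of $w$ by the scalar $\omega^{j}$, we get $e_j(\M_n\rtimes_\alpha Z_p)=e_j\M_n$; and $a\mapsto e_j a$ is a surjective $*$-homomorphism $\M_n\to e_j\M_n$ whose kernel, being a proper two-sided ideal of the simple algebra $\M_n$ (proper because $e_j\neq 0$), is zero. Thus $e_j\M_n\cong\M_n$ and $\M_n\rtimes_\alpha Z_p\cong\bigoplus_{j=0}^{p-1}\M_n$.

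The only step that is not purely formal is showing $\dim C^{*}(w)=p$ (equivalently, that all the spectral projections $e_j$ are nonzero); this is where one genuinely uses the internal structure of the crossed product — the linear independence of the Fourier components $\M_n u^{k}$ — rather than merely the universal property. An equivalent way to package the whole argument is to check directly that $au^{k}\mapsto aV^{k}\otimes\lambda_k$ extends to an isomorphism from $\M_n\rtimes_\alpha Z_p$ onto $\M_n\otimes C^{*}(Z_p)$, and then invoke $C^{*}(Z_p)\cong\mathbb{C}^{p}$.
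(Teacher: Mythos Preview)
Your proof is correct, and it takes a genuinely different route from the paper's. The paper simply writes down an explicit map
\[
\sum_{k=0}^{p-1} a_k U_{\rho^k}\;\longmapsto\;\Bigl(\sum_{k=0}^{p-1}\omega^{jk}a_kV^{k}\Bigr)_{j=0}^{p-1},\qquad \omega=e^{2\pi i/p},
\]
and leaves it to the reader to verify that this is a $*$-isomorphism onto $\bigoplus_{j=0}^{p-1}M_n$. Your argument instead untwists the action: you introduce $w=V^{*}u$, show it generates a central copy of $C^{*}(Z_p)\cong\mathbb{C}^{p}$ inside the crossed product, and split along its minimal idempotents. This is more conceptual---it makes transparent \emph{why} the crossed product decomposes, namely because the action is inner via a genuine representation $k\mapsto V^{k}$---and it uses nothing special about matrix algebras beyond simplicity. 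The paper's formula is in fact exactly what one gets by composing your alternative description $au^{k}\mapsto aV^{k}\otimes\lambda_k$ with the Fourier transform on $C^{*}(Z_p)$, so the two approaches produce the same isomorphism; the paper's explicit form has the practical advantage that one can immediately read off from it the induced map on $K_0$, the dual action $\hat\rho$, and the special element $\zeta$, which is precisely what the paper does in the paragraphs following the lemma.
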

\begin{proof} The identification map is given as follows:\begin{align*}
a_0+a_1U_{\rho}+&a_2U_{\rho^2}+...+a_{p-1}U_{\rho^{p-1}}\\
&\longrightarrow(a_0+a_1V+a_2V^2...+a_{p-1}V^{p-1},\\
&a_0+e^{i\frac{2\pi}{p}}a_1V+e^{i\frac{4\pi}{p}}a_2V^2+...+e^{i\frac{2(p-1)\pi}{p}}a_{p-1}V^{p-1},\\
&a_0+e^{i\frac{4\pi}{p}}a_1V+e^{i\frac{8\pi}{p}}a_2V^2+...+e^{i\frac{2(p-2)\pi}{p}}a_{p-1}V^{p-1},\\
&,...,\\
&a_0+e^{i\frac{2(p-1)\pi}{p}}a_1V+e^{i\frac{2(p-2)\pi}{p}}a_2V^2+...+e^{i\frac{2\pi}{p}}a_{p-1}V^{p-1}),
\end{align*} where $U_{\rho^k},k=1,...,p-1$ are the canonical unitaries in
the cross product algebra. Then one can verify the lemma by this
formula.
\end{proof}

\begin{remark} This lemma is also true if one replaces $\textrm{M}_n$ by an arbitrary unital C*-algebra $A$, and
replaces $Z_p$ by an arbitrary finite cyclic group $G$, i.e., the
map above is still an isomorphism.
\end{remark}

Then $\K_0(\M_n)=Z$, $\K_0(\M_n\rtimes
Z_p)=\underbrace{Z\oplus...\oplus Z}_{p}$, and the map from
$\K_0(\M_n)$ to $\K_0(\M_n\rtimes Z_p)$ sends $x$ to
$(\underbrace{x,...,x}_{p})$; and $\rho_*$ is trivial. It is well
known that $\hat{Z}_p=Z_p$, and the generator of $\hat{Z}_p$ is
$\hat{\rho}$ which takes the identity element to $1$, and takes
$\rho$ to $e^{i\frac{2\pi}{p}}$. So
$$\hat{\rho}(\sum\limits_{k=0}^{p-1}a_kU_{\rho^k})=a_0+e^{-i\frac{2\pi}{p}}a_1U_{\rho}+e^{-i\frac{4\pi}{p}}a_2U_{\rho^2}+...+e^{-i\frac{2(p-1)\pi}{p}}a_{p-1}U_{\rho^{p-1}}$$by the identification formula above, it is easily to see that $\hat{\rho}$ and $\hat{\rho}_*$ is the permutation given by
$(\xi_1,\xi_2,...,\xi_p)\rightarrow(\xi_p,\xi_1,...,\xi_{p-1})$. The
special element $\zeta=(l_0,...,l_{p-1})$, where $l_k$ is the number
of the eigenvalue $e^{i\frac{2k\pi}{p}}$ of the unitary $V$ which
implements the automorphism $\rho$.

In the case $(\underbrace{\textrm{M}_n\oplus...\oplus
\textrm{M}_n}_{p},\rho)$, up to conjugacy, $\rho$ can be chosen to
have the following form:
$\rho(a_1,a_2,...,a_p)=(a_p,a_1,...,a_{p-1}).$

\begin{lem} $\underbrace{\textrm{M}_n\oplus...\oplus
\textrm{M}_n}_{p}\rtimes_{\alpha}Z_p$ is isomorphic to $\M_{pn}$.
\end{lem}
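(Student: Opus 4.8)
The plan is to exhibit a concrete surjective $*$-homomorphism from the crossed product onto $\M_{pn}$ and then finish with a dimension count. Write $B=\underbrace{\M_n\oplus\cdots\oplus\M_n}_{p}$ and identify $\M_{pn}$ with the $*$-algebra of $p\times p$ block matrices with entries in $\M_n$. First I would define a unital $*$-homomorphism $\pi\colon B\to\M_{pn}$ by $\pi(a_1,\dots,a_p)=\operatorname{diag}(a_1,\dots,a_p)$, and let $P\in\M_{pn}$ be the unitary whose $(i,j)$ block equals $1_n$ when $i\equiv j+1\pmod{p}$ and $0$ otherwise, i.e.\ the cyclic shift of the $p$ blocks. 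A direct computation shows $P^{p}=1$ and $P\,\pi(a_1,\dots,a_p)\,P^{*}=\operatorname{diag}(a_p,a_1,\dots,a_{p-1})=\pi\big(\rho(a_1,\dots,a_p)\big)$, so $(\pi,P)$ is a covariant representation of $(B,\rho,Z_p)$ and hence integrates to a $*$-homomorphism $\Phi\colon B\rtimes_{\rho}Z_p\to\M_{pn}$ with $\Phi|_{B}=\pi$ and $\Phi(U_{\rho})=P$.

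Next I would check that $\Phi$ is onto. Its range is a $*$-subalgebra of $\M_{pn}$ containing $P$ and, for each matrix unit $E_{kl}$ of $\M_n$, the element $\pi(E_{kl},0,\dots,0)$, which is precisely the matrix unit of $\M_{pn}$ supported in the $(1,1)$ block. Since conjugation by $P^{i-1}$ carries the first block onto the $i$-th block, the elements $P^{i-1}\,\pi(E_{kl},0,\dots,0)\,(P^{j-1})^{*}$, as $i,j$ range over $\{1,\dots,p\}$ and $k,l$ over $\{1,\dots,n\}$, give all $(pn)^{2}$ matrix units of $\M_{pn}$; hence the range of $\Phi$ is all of $\M_{pn}$.

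Finally, because $B\rtimes_{\rho}Z_p=B\oplus BU_{\rho}\oplus\cdots\oplus BU_{\rho}^{p-1}$ has complex dimension $p\cdot\dim B=p\cdot pn^{2}=(pn)^{2}=\dim\M_{pn}$, the surjective linear map $\Phi$ is automatically injective, so it is the desired isomorphism. The only delicate point is the index bookkeeping --- matching the action $\rho$ to conjugation by $P$ with the correct orientation of the shift, and then verifying that the $P$-conjugates of the $(1,1)$-block matrix units exhaust $\M_{pn}$; everything else is routine. One can also bypass the covariant-representation language entirely and simply check, as in the proof of Lemma~2.1, that $\sum_{k=0}^{p-1}(b_k^{(1)},\dots,b_k^{(p)})\,U_{\rho}^{k}\mapsto\big(b_{(i-j)\bmod p}^{(i)}\big)_{i,j=1}^{p}$ is a well-defined $*$-isomorphism.
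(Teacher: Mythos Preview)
Your argument is correct and is essentially the same construction as the paper's: both produce the explicit isomorphism sending $\sum_k b_k U_\rho^{k}$ to the block matrix whose $(i,j)$-block is the appropriate component of $b_{i-j}$. The paper simply writes down the formula and leaves verification to the reader, whereas you package the same map as the integrated form of the covariant pair $(\pi,P)$ and then justify bijectivity via surjectivity plus a dimension count; your closing remark already identifies the two descriptions as the same thing.
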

\begin{proof} The identification map is given as follows:\begin{align*}
(a^0_0,a^0_1,...,a^0_{p-1})+&(a^1_0,a^1_1,...,a^1_{p-1})U_{\rho}+...+(a^{p-1}_0,a^{p-1}_1,...,a^{p-1}_{p-1})U_{\rho^{p-1}}\\
&\longrightarrow\left(
   \begin{array}{cccc}
     a^0_0 & a^1_0 & \ldots & a^{p-1}_0 \\
     a^{p-1}_{p-1} & a^0_{p-1} & \ldots & a^{p-2}_{p-1} \\
     \vdots & \vdots &  & \vdots \\
     a^1_1 & a^2_1 & \ldots & a^{0}_1\\
   \end{array}
 \right).
\end{align*}
\end{proof}

Then $\K_0(\underbrace{\textrm{M}_n\oplus...\oplus
\textrm{M}_n}_{p}\rtimes_{\alpha}Z_p)=Z$, the canonical map from
K$_0(\underbrace{\textrm{M}_n\oplus...\oplus \textrm{M}_n}_{p})$ to
K$_0(\textrm{M}_{pn})$ sends $(x_1,...,x_p)$ to
$\sum\limits_{k=1}^{p}x_k$; and $\rho_{*}$ is the permitation. Let
$$\xi=(a^0_0,a^0_1,...,a^0_{p-1})+(a^1_0,a^1_1,...,a^1_{p-1})U_{\rho}+...+(a^{p-1}_0,a^{p-1}_1,...,a^{p-1}_{p-1})U_{\rho^{p-1}},$$

 so
\begin{equation*}
\begin{split}
\hat{\rho}(\xi)&=(a^0_0,a^0_1,...,a^0_{p-1})+e^{-i\frac{2\pi}{p}}(a^1_0,a^1_1,...,a^1_{p-1})U_{\rho}+...\\
&\quad+e^{-i\frac{2(p-1)\pi}{p}}(a^{p-1}_0,a^{p-1}_1,...,a^{p-1}_{p-1})U_{\rho^{p-1}}.
\end{split}
\end{equation*} Then by the identification in Lemma 2.3, the dual action is as
follows:
 $$\hat{\rho}(C)=\left(
              \begin{array}{cccc}
                1 &  &  &  \\
                 & e^{i\frac{2\pi}{p}} &  &  \\
                 &  & \ddots &  \\
                 &  &  & e^{i\frac{2(p-1)\pi}{p}} \\
              \end{array}
            \right)
C\left(
              \begin{array}{cccc}
                1 &  &  &  \\
                 & e^{-i\frac{2\pi}{p}} &  &  \\
                 &  & \ddots &  \\
                 &  &  & e^{-i\frac{2(p-1)\pi}{p}} \\
              \end{array}
            \right),$$
for all $C\in M_{pn}$. Hence $\hat{\rho}_{*}$ is trivial. The
special element $\zeta$ is $n$.
\begin{remark}Lemma 2.3 also verifies the Takai Duality
for $(\M_n, Z_p)$.

\end{remark}
\section{Local existence}\label{sect3}
In this section, we are going to establish the local existence
theorem, which states that morphisms between the invariant of the
finite dimensional \C-dynamical systems can be lifted to morphisms
between the finite dimensional \C-dynamical systems. This local
existence theorem together with the local uniqueness theorem in the
next section are the two main ingredients in the equivariant
Elliott's intertwining argument.

\begin{thrm} Let $(A_k,\alpha_k,Z_p)$ and $(B_n,\beta_n,Z_p)$ be two
irreducible finite dimensional \C-dynamical systems. Let $F_k$ be an
ordered group morphism from $(K_0(A_k),[$ $1_{A_k}],\alpha_{k*})$ to
$(K_0(B_n),$ $[1_{B_n}],\beta_{n*})$. Let $\phi_k$ be an ordered
group morphism from
$(K_0(A_k\rtimes_{\alpha_k}Z_p),\hat{\alpha}_{k*})$ to
$(K_0(B_n\rtimes_{\beta_n}Z_p),\hat{\beta}_{n*})$, which preserves
the special elements. Then there exists a homomorphism $\psi_k$ from
$(A_k,\alpha_k,Z_p)$ to $(B_n,\beta_n,Z_p)$, such that
$\psi_{k*}=F_k$, and $\widetilde{\psi}_{k*}=\phi_k$, where
$\widetilde{\psi}_k$ is the natural extension of $\psi_k$ to
$A_k\rtimes_{\alpha_k}Z_p$.
\end{thrm}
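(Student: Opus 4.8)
The plan is to treat the four possible combinations of irreducible types for the source system $(A_k,\alpha_k,Z_p)$ and target system $(B_n,\beta_n,Z_p)$ separately, since the preliminaries of Section~\ref{sect2} give us an explicit dictionary between the dynamical data and the $\textrm{K}$-theory in each case. Recall the two irreducible types: type~(I), $(\M_n,\rho)$ with $\rho=\operatorname{Ad}V$ for a diagonal order-$p$ unitary $V$, whose invariant records $\K_0=Z$ with trivial $\rho_*$ and whose crossed product is $\M_n^{\oplus p}$ with special element $\zeta=(l_0,\dots,l_{p-1})$ counting eigenvalue multiplicities of $V$; and type~(II), $(\M_n^{\oplus p},\mathrm{shift})$, whose crossed product is $\M_{pn}$ with $\hat\rho_*$ trivial and $\zeta=n$. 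So a homomorphism $\psi_k$ of dynamical systems is, concretely, a unital-up-to-corner embedding $\oplus\M_{n_i}\to\oplus\M_{m_j}$ intertwining the two $Z_p$-actions; its existence is governed by multiplicity data (nonnegative integers), and the content of the theorem is that the prescribed $\textrm{K}_0$-maps $F_k$ and $\phi_k$ force these multiplicities to be consistent and nonnegative.

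First I would dispose of the ``mixed'' cases. If $A_k$ is of type~(II) and $B_n$ of type~(I): a $Z_p$-equivariant map from $(\M_n^{\oplus p},\mathrm{shift})$ into $(\M_m,\operatorname{Ad}V)$ exists iff one can embed so that the shift is implemented by conjugation by $V$; this forces $V$ to have all $p$ eigenvalues with equal multiplicity, i.e. $\zeta=(l,\dots,l)$, and $\phi_k$ maps $Z$ (generator the class of a minimal projection of $\M_n^{\oplus p}$, wait --- the generator is the class of the identity of one summand) into $\K_0(\M_m\rtimes Z_p)=Z^{\oplus p}$; I would check that the hypothesis ``$\phi_k$ preserves $\zeta$'' together with compatibility with $F_k$ and the canonical maps $\iota_*$ pins down the embedding multiplicity, and that the shift-equivariant embedding of the required size can always be built (tensoring a fixed minimal shift-model with a matrix unit block). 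If $A_k$ is type~(I) and $B_n$ type~(II): a $Z_p$-equivariant map $(\M_n,\operatorname{Ad}V)\to(\M_m^{\oplus p},\mathrm{shift})$ exists iff the composite action on the image is conjugate to a multiple of the regular representation, which is automatic since $\operatorname{Ad}V$ becomes inner after amplifying by the regular representation; here the multiplicity is read off from $F_k$ alone and one checks $\phi_k$ is then determined and automatically matches via the $\iota_*$-square. The only genuine constraint to verify in these two cases is that the integer $F_k$ sends $[1_{A_k}]$ to something compatible with $[1_{B_n}]$-divisibility, which is exactly ``$F_k$ ordered and unital-compatible.''

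The core case is type~(I) to type~(I): $(\M_a,\operatorname{Ad}V)\to(\M_b,\operatorname{Ad}W)$. A $Z_p$-equivariant unital-corner homomorphism is, after diagonalizing, determined by a $p\times p$ matrix of nonnegative integers $(c_{ij})$ recording how many eigenvalue-$e^{2\pi i j/p}$ directions of $W$ receive an eigenvalue-$e^{2\pi i i/p}$ block of $V$; the row sums relate to the eigenvalue multiplicities $l_i$ of $V$ and the column sums to those $l'_j$ of $W$, the total $\sum c_{ij}$ equals the multiplicity $F_k(1)=:r$, and --- crucially --- equivariance forces the ``shift-covariance'' relation $c_{ij}=c_{i+1,j+1}$ (indices mod $p$), so that $C$ is a circulant determined by its first row $(d_0,\dots,d_{p-1})$ with $d_t\ge 0$. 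I would then translate $\phi_k\colon Z^{\oplus p}\to Z^{\oplus p}$ (equivariant for the cyclic-shift actions $\hat\alpha_*,\hat\beta_*$, hence itself given by a nonnegative-integer circulant) and the special-element condition $\phi_k(\zeta_A)=\zeta_B$, i.e. the circulant convolution $(d_*)*(l_*)=(l'_*)$, to see that $\phi_k$ literally \emph{is} the circulant built from $(d_t)$; the compatibility square with $\iota_*$ (which sends $x\mapsto(x,\dots,x)$) recovers $\sum d_t=r=F_k(1)$. The main obstacle --- and the place where primality of $p$ is used --- is \textbf{nonnegativity and existence of the lift}: we must extract from the given abstract ordered-group data a genuinely nonnegative first row $(d_t)$ realizing $\phi_k$. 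Since $\hat\alpha_*$ is a single $p$-cycle, an equivariant group endomorphism of $Z^{\oplus p}$ is exactly a $Z[Z_p]$-module map, i.e. multiplication by an element of the group ring $Z[Z_p]\cong Z[x]/(x^p-1)$; the order condition ($\phi_k$ maps $\K_0^+$ to $\K_0^+$, equivalently maps the special element, a strictly positive vector, to a nonnegative vector, and more importantly maps the \emph{canonical cone} coming from $\iota_*\K_0(A_k)^+$ appropriately) must be shown to force this ring element to lie in the \emph{positive cone} $Z_{\ge0}[Z_p]$. I expect to argue this by evaluating the circulant's Fourier transform at the $p$-th roots of unity and using that the special elements $l_*,l'_*$ are honestly positive to solve for $d_*$, then invoking the order-preservation on enough test elements to conclude $d_t\ge0$ for all $t$; once nonnegativity is in hand, the equivariant homomorphism $\psi_k$ is assembled blockwise from the standard diagonal building blocks and a routine check gives $\psi_{k*}=F_k$ and $\widetilde\psi_{k*}=\phi_k$.
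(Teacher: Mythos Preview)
Your four-case strategy matches the paper's exactly, and your identification of $\phi_k$ in the type~(I)$\to$type~(I) case as a circulant (from $\hat\alpha_*,\hat\beta_*$-equivariance) is correct and is precisely how the paper proceeds. But you have misidentified the ``main obstacle.'' The nonnegativity of the circulant entries is \emph{immediate} from the hypothesis, not something requiring a Fourier or primality argument: $\phi_k$ is assumed to be an \emph{ordered} group morphism between $K_0$-groups of finite direct sums of matrix algebras, whose positive cones are the standard orthants $(Z_{\ge0})^p$; applying $\phi_k$ to the standard basis vectors (which are positive) gives directly that every matrix entry is $\ge0$. Primality of $p$ is not used in this proof at all --- it is used earlier, in Section~\ref{sect2}, to reduce the list of irreducible actions to just the two types (I) and (II). Once you have a nonnegative first row $(l_{11},\dots,l_{1p})$, the paper simply writes down the embedding: amplify $a\mapsto a\otimes id_{n/k}$, twist by the diagonal scalar unitary $e$ whose blocks are $e^{-2\pi i t/p}\otimes id$ with multiplicity $l_{1,t+1}$, and conjugate by a $W$ matching the eigenvalue list of $(U\otimes id)e$ to that of $V$; the special-element condition guarantees these lists agree.

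You also invoke the $\iota_*$-compatibility square to recover $\sum d_t=F_k(1)$. Be careful: that square is \emph{not} among the hypotheses of this theorem (it only appears in the subsequent corollary). In the irreducible case treated here, the unit-preservation of $F_k$ forces $F_k=n/k$, and the special-element condition $\phi_k\zeta=\zeta'$ independently forces $(\sum_j l_{1j})k=n$, so the compatibility you want is automatic without assuming the square. Finally, you never address type~(II)$\to$type~(II); in the paper this is case~(4), and it is the easiest: $\phi_k=n/k$ is forced, $F_k$ is a circulant with row sum $n/k$, and one writes down the block-permutation embedding directly from the first row of $F_k$.
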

\begin{proof} We are going to prove the theorem in four different cases.

(1). $A_k=\textrm{M}_k,\;B_n=\textrm{M}_n$.

Suppose $U\in \textrm{M}_k$ and $V\in \textrm{M}_n$ are the two
unitaries which implement the automorphisms on $\textrm{M}_k$ and
$\textrm{M}_n$ respectively. Since $F_k$ preserves the scale, then
$F_k=\frac{n}{k}$. By Lemma 2.1, $\phi_k$ is of the
form:$$\phi_k=\left(
                                                             \begin{array}{cccc}
                                                               l_{11} & l_{12} & \ldots & l_{1p} \\
                                                               l_{21} & l_{22} & \ldots & l_{2p} \\
                                                               \cdots & \cdots &  \cdots& \cdots \\
                                                               l_{p1} & l_{p2} & \ldots & l_{pp} \\
                                                             \end{array}
                                                           \right).
$$ Moreover, $\phi_k$ intertwines $\hat{\alpha}_{k*}$ and
$\hat{\beta}_{n*}$, by calculation, one obtains that $$\phi_k=\left(
                                                             \begin{array}{cccc}
                                                               l_{11} & l_{12} & \ldots & l_{1p} \\
                                                               l_{1p} & l_{11} & \ldots & l_{1p-1} \\
                                                               \cdots & \cdots &  \cdots& \cdots \\
                                                               l_{12} & l_{13} & \ldots & l_{11} \\
                                                             \end{array}
                                                           \right).
$$ By assumption, we have $\phi_{k}\zeta=\zeta'$, where $\zeta$ and $\zeta'$ are the
two special elements in $ \textrm{M}_k\rtimes_{\alpha_k}Z_p$ and
$\textrm{M}_n\rtimes_{\beta_n}Z_p$, then
$(l_{11}+l_{12}+...+l_{1p})k=n$.

Define
\begin{align*}
&e_1=e_2=...=e_{l_{11}}=I_k,\\
&e_{l_{11}+1}=e_{l_{11}+2}=...=e_{l_{11}+l_{12}}=e^{-i\frac{2\pi}{p}}\otimes
id_{\frac{n}{k}},\\
&e_{l_{11}+l_{12}+1}=e_{l_{11}+l_{12}+2}=...=e_{l_{11}+l_{12}+l_{13}}=e^{-i\frac{4\pi}{p}}\otimes
id_{\frac{n}{k}},\\
&,...,\\
\end{align*}
set
$$e=diag(e_1,...\,,e_{l_{11}},e_{l_{11}+1},...\,,e_{l_{11}+l_{12}},...\,,e_{l_{11}+...+l_{1p}}),$$
then $e(U\otimes id_{\frac{n}{k}})=(U\otimes id_{\frac{n}{k}})e$.

Because $\phi_k$ preserves the special elements, then the eigenvalue
list of $(U\otimes id_{\frac{n}{k}})e$ is the same as $V$, then
there exists a unitary $W$, such that $W^*VW=(U\otimes
id_{\frac{n}{k}})e$.

Define a homomorphism $\psi_k:\textrm{M}_k\rightarrow \textrm{M}_n$
by:
$$\psi_k(a)=W(a\otimes id_{\frac{n}{k}})W^*,$$
then $(U^*\otimes id_{\frac{n}{k}})W^*VW(a\otimes
id_{\frac{n}{k}})=(a\otimes id_{\frac{n}{k}})(U^*\otimes
id_{\frac{n}{k}})W^*VW$,

\noindent namely, $\psi_k$ intertwines $\alpha_k$ and $\beta_n$, and
$\psi_{k*}=F_k$. Since the natural extension $\widetilde{\psi}_k$
intertwines $\hat{\alpha}_k$ and $\hat{\beta}_n$, by calculation,
$\widetilde{\psi}_{k*}=\phi_k$.

(2). $A_k=\textrm{M}_k,\;B_n=\underbrace{\textrm{M}_n\oplus...\oplus
\textrm{M}_n}_{p}$.

Obviously, $F_k=\left(
                  \begin{array}{c}
                    \frac{n}{k} \\
                    \vdots \\
                    \frac{n}{k} \\
                  \end{array}
                \right)
$. Since $\phi_k$ intertwines $\hat{\alpha}_{k*}$ and
$\hat{\beta}_{n*}$, by calculation, we have: $\phi_k=\left(
                                                       \begin{array}{cccc}
                                                         l_1 & l_1 & \cdots & l_1 \\
                                                       \end{array}
                                                     \right)
$. Moreover, because $\phi_k$ preserves the special element, then
$l_1=\frac{n}{k}$. Let $V$ be the unitary implementing the
automorphism on $\textrm{M}_k$.

Define a homomorphism
$\psi_k:\textrm{M}_k\rightarrow\underbrace{\textrm{M}_n\oplus...\oplus
\textrm{M}_n}_{p}$ by:
$$\psi_k(a)=(W_1(a\otimes id_{\frac{n}{k}})W^*_1,W_2(a\otimes id_{\frac{n}{k}})W^*_2,...\,,W_p(a\otimes id_{\frac{n}{k}})W^*_p),$$ where

\hspace{1.0cm}$W_1=1\otimes id_{\frac{n}{k}},\;W_2=V^*\otimes
id_{\frac{n}{k}},\;...\,,\;W_p=(V^*)^{p-1}\otimes id_{\frac{n}{k}}$.

\noindent Then it is easy to check that $\psi_k$ intertwines
$\alpha_k$ and $\beta_n$, and $\psi_{k*}=F_k$. The natural extension
$\widetilde{\psi}_{k}$ intertwines $\hat{\alpha}_{k}$ and
$\hat{\beta}_{n}$, so $\widetilde{\psi}_{k*}=\phi_k$.

(3). $A_k=\underbrace{\textrm{M}_k\oplus...\oplus
\textrm{M}_k}_{p},\;B_n=\M_n$.

Since $F_k$ intertwines $\alpha_k$ and $\beta_n$, by calculation,
$F_k=\left(
       \begin{array}{cccc}
         \frac{n}{pk} &\frac{n}{pk}  & \cdots & \frac{n}{pk} \\
       \end{array}
     \right)
$. Since $\phi_k$ intertwines $\hat{\alpha}_{k*}$ and
$\hat{\beta}_{n*}$, we have $\phi_k=\left(
                               \begin{array}{c}
                                 l_1 \\
                                 l_1 \\
                                \vdots\\
                                 l_1 \\
                               \end{array}
                             \right)
$. Moreover, by the assumption of preserving the special elements,
one gets $l_1=\frac{n}{pk}$. Let $V$ be the unitary implementing the
automorphism on $\textrm{M}_n$. To define a homomorphism which
intertwines $\alpha_k$ and $\beta_n$, we need to find a unitary $W
$, such that Ad$(W^*VW)$ sends diag$(a_1\otimes
id_{\frac{n}{pk}},a_2\otimes id_{\frac{n}{pk}},...\,,a_p\otimes
id_{\frac{n}{pk}})$ to diag$(a_p\otimes id_{\frac{n}{pk}},a_1\otimes
id_{\frac{n}{pk}},...\,,a_{p-1}\otimes id_{\frac{n}{pk}})$ for all
$(a_1,...\,,a_p)$ in $\underbrace{\textrm{M}_k\oplus...\oplus
\textrm{M}_k}_{p}$. By Lemma IV.2 in \cite{HR}, this can be done.

(4). $A_k=\underbrace{\textrm{M}_k\oplus...\oplus
\textrm{M}_k}_{p},\;B_n=\underbrace{\textrm{M}_n\oplus...\oplus
\textrm{M}_n}_{p}$.

Since $F_k$ intertwines $\alpha_k$ and $\beta_n$, by calculation, we
have: $$F_k=\left(
                                                             \begin{array}{cccc}
                                                               l_{11} & l_{12} & \ldots & l_{1p} \\
                                                               l_{1p} & l_{11} & \ldots & l_{1p-1} \\
                                                               \cdots & \cdots &  \cdots& \cdots \\
                                                               l_{12} & l_{13} & \ldots & l_{11} \\
                                                             \end{array}
                                                           \right),$$
and $(l_{11}+l_{12}+...+l_{1p})k=n$. Similarly, we also have
$\phi_k=\frac{n}{k}$.

Define a homomorphism $\psi_k:
\underbrace{\textrm{M}_k\oplus...\oplus \textrm{M}_k}_{p}\rightarrow
\underbrace{\textrm{M}_n\oplus...\oplus \textrm{M}_n}_{p}$ by:
$$\psi_k(a_1,a_2,...\,,a_p)=(\psi^{1}_k(\cdot),\psi^{2}_k(\cdot),...\,,\psi^{p}_{k}(\cdot)),$$
where $(\cdot)$ is the abbreviation of $(a_1,a_2,...,a_p)\in
\underbrace{\textrm{M}_n\oplus...\oplus \textrm{M}_n}_{p}$, and

\hspace{1.0cm}$\psi^{1}_k(a_1,a_2,...\,,a_p)=$diag$(a_1\otimes
id_{l_{11}},a_2\otimes id_{l_{12}},...\,,a_p\otimes id_{l_{1p}})$,

\hspace{1.0cm}$\psi^{2}_k(a_1,a_2,...\,,a_p)=$diag$(a_2\otimes
id_{l_{11}},a_3\otimes id_{l_{12}},...\,,a_1\otimes id_{l_{1p}})$,

\hspace{1.0cm}$\psi^{3}_k(a_1,a_2,...\,,a_p)=$diag$(a_{3}\otimes
id_{l_{11}},a_4\otimes id_{l_{12}},...\,,a_{2}\otimes id_{l_{1p}})$,

\hspace{1.0cm}$,...\,,$

\hspace{1.0cm}$\psi^{p}_k(a_1,a_2,...\,,a_p)=$diag$(a_p\otimes
id_{l_{11}},a_1\otimes id_{l_{12}},...\,,a_{p-1}\otimes
id_{l_{1p}})$.

\noindent Then $\psi_k$ satisfies all the requirements.
\end{proof}
\begin{cor}\label{cor1} Let $(A_k,\alpha_k,Z_p)$ and $(B_n,\beta_n,Z_p)$ be two
finite dimensional \C- dynamical systems. Let $F_k$ be an ordered
group morphism from $(K_0(A_k),[1_{A_k}],\alpha_{k*})$ to
$(K_0(B_n),$ $[1_{B_n}],\beta_{n*})$. Let $\phi_k$ be an ordered
group morphism from
$(K_0(A_k\rtimes_{\alpha_k}Z_p),\hat{\alpha}_{k*})$ to
$(K_0(B_n\rtimes_{\beta_n}Z_p),\hat{\beta}_{n*})$, which preserves
the special elements, and the following diagram
$$
                          \begin{array}{ccc}
                            K_0(A_k) & \longrightarrow & K_0(A_k\rtimes_{\alpha_k}Z_p) \\
                            \Big\downarrow\!\;F_k &  & \Big\downarrow\!\;\phi_k \\
                            K_0(B_n) & \longrightarrow & K_0(B_n\rtimes_{\beta_n}Z_p) \\
                          \end{array}
$$ commutes. Then there exists a
homomorphism $\psi_k$ from $(A_k,\alpha_k,Z_p)$ to
$(B_n,\beta_n,Z_p)$, such that $\psi_{k*}=F_k$, and
$\widetilde{\psi}_{k*}=\phi_k$, where $\widetilde{\psi}_k$ is the
natural extension of $\psi_k$ to $A_k\rtimes_{\alpha_k}Z_p$.
\end{cor}
\begin{proof}
\end{proof}
\section{local uniqueness}\label{sect4}
In this section, we are going to establish the local uniqueness
theorem, namely, if two morphisms between the finite dimensional
\C-dynamical systems agree on the K-theoretic invariants, then they
are unitarily equivalent by an equivariant unitary, namely, a
unitary in the fixed point subalgebra of the codomain algebra.
\begin{thrm} Let $\phi_k$ and $\psi_k$ be two homomorphisms from the irreducible finite dimensional \C-dynamical system $(A_k, \alpha_k, Z_p)$ to $(B_n, \beta_n,
Z_p)$. Denote by $\widetilde{\phi}_k$ and $\widetilde{\psi}_k$ the
morphisms from $A_k\rtimes_{\alpha_k}Z_p$ to
$B_n\rtimes_{\beta_n}Z_p$ induced by $\phi_k$ and $\psi_k$,
respectively. If $\phi_{k*}=\psi_{k*}$ and
$\widetilde{\phi}_{k*}=\widetilde{\psi}_{k*}$, then there exists a
unitary $W$ in $B^{\beta_n}_n$, the fixed point subalgebra of $B_n$,
such that $\phi_k=AdW\circ\psi_k$.
\end{thrm}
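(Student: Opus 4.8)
The plan is to run a case analysis parallel to that in the proof of the existence theorem (Theorem 3.1), according to whether each of $A_k,B_n$ is a single matrix algebra or a $p$-fold direct sum of copies of one. The backbone is the elementary fact that two $*$-homomorphisms between finite dimensional \C-algebras are unitarily equivalent in the codomain if and only if they induce the same map on $K_0$; the content of the theorem is that the conjugating unitary can be chosen inside $B_n^{\beta_n}$, and the $K$-theory of the crossed products is exactly the extra data needed to arrange this.

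First I would describe, in each of the four cases, the general form of an equivariant homomorphism $(A_k,\alpha_k)\to(B_n,\beta_n)$ together with a complete invariant for conjugacy by a unitary of $B_n^{\beta_n}$. If $A_k=\M_k$ and $B_n=\M_n$, an equivariant $\phi$ has the form $\phi=\operatorname{Ad}(w)(\,\cdot\,\otimes 1_{n/k})$, and equivariance forces $w^{*}Vw=U\otimes y$ for some unitary $y\in\M_{n/k}$ with $y^{p}=1$, where $U,V$ implement $\alpha_k,\beta_n$; conjugating $\phi$ by $W\in B_n^{\beta_n}=\{V\}'\cap\M_n$ replaces $w$ by $Ww$ and leaves $y$ unchanged, whereas the residual freedom $w\mapsto w(1\otimes t)$ replaces $y$ by a unitary conjugate, so the invariant of $\phi$ is precisely the eigenvalue multiplicity list of $y$. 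If $A_k=\M_k$ (resp.\ $A_k=\M_k\oplus\dots\oplus\M_k$) and $B_n=\M_n\oplus\dots\oplus\M_n$, then equivariance forces $\phi$ to be determined by its first coordinate $\phi^{1}$ alone, the remaining coordinates being $\phi^{1}\circ\alpha_k^{-j}$; since $B_n^{\beta_n}$ is the diagonal copy of $\M_n$, conjugation changes $\phi^{1}$ only up to inner automorphisms of $\M_n$, so the invariant is the multiplicity datum of $\phi^{1}$ (a single integer, resp.\ an ordered $p$-tuple). Finally, if $A_k=\M_k\oplus\dots\oplus\M_k$ and $B_n=\M_n$: here I would use that $p$ is prime to conclude that the $V$-orbit of the range projection of the restriction of $\phi$ to one summand has full length $p$, whence $\M_n$ with the $\operatorname{Ad}(V)$-action is $(n/p)$ copies of the regular representation of $Z_p$; from this structure one sees that any two equivariant homomorphisms are conjugate by a unitary commuting with $V$, with no further invariant.

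Next I would match these invariants against the hypotheses $\phi_{k*}=\psi_{k*}$ and $\widetilde\phi_{k*}=\widetilde\psi_{k*}$. In the $\M_k\to\M_n$ case $\phi_{k*}$ on $K_0(\M_k)=\mathbb Z$ only sees $n/k=\sum(\text{eigenvalue multiplicities of }y)$, so one must pass to the crossed products: by Lemma 2.1 both $A_k\rtimes_{\alpha_k}Z_p$ and $B_n\rtimes_{\beta_n}Z_p$ are $p$-fold direct sums of matrix algebras, $\widetilde\phi_{k*}$ is a $p\times p$ integer matrix, and because $\widetilde\phi_k$ intertwines the dual actions (which act on $K_0$ by the cyclic shift) this matrix is circulant; a computation as in the proof of Theorem 3.1 identifies its first row with the eigenvalue multiplicity list of $y$. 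Hence $\widetilde\phi_{k*}=\widetilde\psi_{k*}$ forces the two $y$'s to be conjugate, and the first paragraph then produces the required $W\in B_n^{\beta_n}$. In the two cases with $B_n$ a $p$-fold sum, $\phi_{k*}$ itself carries the multiplicity datum of $\phi^{1}$ (directly when the source is $\M_k$, as the first row of a circulant matrix when the source is a $p$-fold sum), so $\phi_{k*}=\psi_{k*}$ pins down $\phi^{1}$ up to inner equivalence and again yields the $B_n^{\beta_n}$-unitary. In the remaining case $\phi_k$ and $\psi_k$ are conjugate by the structural observation of the previous paragraph, irrespective of the precise maps. In all four cases the conjugating unitary lies in $B_n^{\beta_n}$, which is what was to be shown.

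I expect the principal obstacle to be the bookkeeping in the $\M_k\to\M_n$ case: verifying carefully, through the isomorphisms of Lemma 2.1, that the first row of $\widetilde\phi_{k*}$ is genuinely the eigenvalue multiplicity list of $y=U^{*}(w^{*}Vw)$ — and in particular that it is unaffected by the admissible modifications of $w$ — is the real computation, essentially the one already implicit in the proof of Theorem 3.1. A secondary point requiring care is the $\M_k\oplus\dots\oplus\M_k\to\M_n$ case, where primality of $p$ is used to exclude short orbits and thereby force the regular-representation picture; without primality this case would acquire a genuine extra invariant. Once these two points are in place, collecting the four cases and reading off that $W\in B_n^{\beta_n}$ is routine.
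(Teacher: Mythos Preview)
Your proposal is correct and follows the same four-case analysis as the paper. The organizing principle you adopt---first isolate a complete invariant for $B_n^{\beta_n}$-conjugacy of equivariant maps in each case, then match it against the $K$-theoretic hypotheses---is cleaner than the paper's presentation, which simply runs the computations. In particular, in Case (2) ($A_k=\M_k$, $B_n=\M_n^{\oplus p}$) your observation that equivariance forces $\phi^{j}=\phi^{1}\circ\alpha_k^{\,1-j}$ reduces the problem at once to conjugating $\phi^{1}$ and $\psi^{1}$ by a single unitary $W\in\M_n$, and then $(W,\ldots,W)\in B_n^{\beta_n}$ does the job; this is considerably shorter than the paper's explicit construction via the chains $L_i,N_i$.

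Case (3) is where your sketch is thinnest. The primality of $p$ is not actually needed there: the projections $e_j=\phi(0,\ldots,1,\ldots,0)$ are already pairwise orthogonal, so the $\operatorname{Ad}V$-orbit of $e_1$ automatically has length $p$; your remark that ``without primality this case would acquire a genuine extra invariant'' is therefore off the mark. More importantly, while the balanced-spectrum observation for $V$ is correct (in the unital case), the assertion ``from this structure one sees that any two equivariant homomorphisms are conjugate by a unitary commuting with $V$'' is exactly the content to be proved, not a consequence one can just read off. The paper supplies this step by writing $X^{*}V^{*}X=LP$ with $L=\operatorname{diag}(L_1,\ldots,L_p)$ block-diagonal and $P$ the block cyclic shift, extracting the cyclic relations $L_{j}L_{j-1}\cdots L_{j+1}=I$ from $V^{p}=I$, and then exhibiting an explicit block-diagonal $Z$ solving $Z(LP)Z^{*}=NP$; this short cocycle computation is the piece your outline still owes.
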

\begin{proof} Again we are going to prove the theorem in four cases.

(1). $A_k=\textrm{M}_k, B_n=\textrm{M}_n$.

Let $U\in\textrm{M}_k$ and $V\in\textrm{M}_n$ be the two unitaries
which implement the action $\alpha_k$ and $\beta_n$, respectively.
Let $X$ and $Y$ be two unitaries in $B_n$ such that
$$\phi_k(a)=X(a\otimes id_{\frac{n}{k}})X^*, \psi_k(a)=Y(a\otimes
id_{\frac{n}{k}})Y^*, \forall a\in \textrm{M}_k.$$ Since each
$\phi_k$ and $\psi_k$ intertwines the actions $\alpha_k$ and
$\beta_n$, we have that
\begin{align*}&X(UaU^*\otimes id_{\frac{n}{k}})X^*=VX(a\otimes
id_{\frac{n}{k}})X^*V^*,\\
&Y(UaU^*\otimes id_{\frac{n}{k}})Y^*=VY(a\otimes
id_{\frac{n}{k}})Y^*V^*.
\end{align*}
Hence, \begin{align*}
&X^*V^*X(U\otimes id_{\frac{n}{k}})(a\otimes
id_{\frac{n}{k}})=(a\otimes id_{\frac{n}{k}})X^*V^*X(U\otimes
id_{\frac{n}{k}}),\\
&Y^*V^*Y(U\otimes id_{\frac{n}{k}})(a\otimes
id_{\frac{n}{k}})=(a\otimes id_{\frac{n}{k}})Y^*V^*Y(U\otimes
id_{\frac{n}{k}}),
\end{align*} Set $L=X^*V^*X(U\otimes
id_{\frac{n}{k}})$, $N=Y^*V^*Y(U\otimes id_{\frac{n}{k}})$, then $L$
and $N$ commute with $(a\otimes id_{\frac{n}{k}})$, for all
$a\in\M_k$, and $$L^{p}=X^*(V^*)^{p}X(U^p\otimes
id_{\frac{n}{k}})=I, N^p=I.$$ Note that $L,N$ commute with all
$(a\otimes id_{\frac{n}{k}})$, then $L,N$ belong to $I_k\otimes
\M_{\frac{n}{k}}$. Let $S$ and $R$ be two unitaries in $I_k\otimes
\M_{\frac{n}{k}}$ such that \begin{align*} &SLS^*=I_k\otimes
diag(\lambda_1,...,\lambda_{\frac{n}{k}}),\\
&RN\!R^*=I_k\otimes diag(\mu_1,...,\mu_{\frac{n}{k}}).
\end{align*} For $a_0+a_1U_{\rho}+...+a_{p-1}U_{\rho^{p-1}}\in
\M_k\rtimes_{\alpha_k}Z_p$, take $a_0=a_2=...=a_{p-1}=0$, and
$a_1=U^*$, by Lemma 2.1, we have that
\begin{align*}\tilde{\phi}_k(I,
e^{i\frac{2\pi}{p}}I,...,e^{i\frac{2(p-1)\pi}{p}}I)&=(X(U^*\otimes
id_{\frac{n}{k}})X^*V,e^{\frac{i2\pi}{p}}X(U^*\otimes
id_{\frac{n}{k}})X^*V,...)\\
&=(XL^*X^*,e^{\frac{i2\pi}{p}}XL^*X^*,...),\\
\tilde{\psi}_k(I,
e^{i\frac{2\pi}{p}}I,...,e^{i\frac{2(p-1)\pi}{p}}I)&=(Y(U^*\otimes
id_{\frac{n}{k}})Y^*V,e^{\frac{i2\pi}{p}}Y(U^*\otimes
id_{\frac{n}{k}})Y^*V,...)\\
&=(YN^*Y^*,e^{\frac{i2\pi}{p}}YN^*Y^*,...).
\end{align*} Since $\tilde{\phi}_{k*}=\tilde{\psi}_{k*}$, then there
exists a unitary $Z$ such that $XL^*X^*=ZYN^*Y^*Z^*$, hence,
$L=X^*ZYN^*Y^*Z^*X$, so
$\{\lambda_1,...,\lambda_{\frac{n}{k}}\}=\{\mu_1,...,\mu_{\frac{n}{k}}\}$.
Then there exists a unitary $\tilde{Z}\in
I_k\otimes\M_{\frac{n}{k}}$ such that $L=\tilde{Z}N\tilde{Z}^*$.
Hence, $$X^*V^*X(U\otimes
id_{\frac{n}{k}})=\tilde{Z}Y^*V^*Y(U\otimes
id_{\frac{n}{k}})\tilde{Z}^*,$$ which implies that
$VX\tilde{Z}Y^*=X\tilde{Z}Y^*V$. Therefore $X\tilde{Z}Y^*\in
B^{\beta_n}_{n}$, put $W=X\tilde{Z}Y^*$, then
$\phi_k=AdW\circ\psi_n$.

(2). $A_k=\M_k, B_n=\underbrace{\textrm{M}_n\oplus...\oplus
\textrm{M}_n}_p$.

Let $U$ be the unitary which implements the action $\alpha_k$,
namely, $\rho(a)=UaU^*$, for all $a\in\M_k$. Let $X_1,...,X_p$ be
the untaries in $\M_n$ such that $\phi_k(a)=(X_1a\otimes
id_{\frac{n}{k}}X^*_1, ..., X_pa\otimes id_{\frac{n}{k}}X^*_p )$,
for all $a\in \M_k$; let $Y_1,...,Y_p$ be the untaries in $\M_n$
such that $\psi_k(a)=(Y_1a\otimes id_{\frac{n}{k}}Y^*_1, ...,
Y_pa\otimes id_{\frac{n}{k}}Y^*_p )$, for all $a\in \M_k$. Since
each $\phi_k$ and $\psi_k$ intertwines $\alpha_k$ and $\beta_n$, we
obtain that:\begin{align*} (X_1(UaU^*\otimes
id_{\frac{n}{k}})X^*_1,\,& X_2(UaU^*\otimes
id_{\frac{n}{k}})X^*_2,...,X_p(UaU^*\otimes
id_{\frac{n}{k}})X^*_p)\\
&=(X_p(a\otimes id_{\frac{n}{k}})X^*_p,X_1(a\otimes
id_{\frac{n}{k}})X^*_1,...,X_{p-1}(a\otimes
id_{\frac{n}{k}})X^*_{p-1}).
\end{align*} Hence,
\begin{align*}
&X_1(UaU^*\otimes id_{\frac{n}{k}})X^*_1=X_p(a\otimes
id_{\frac{n}{k}})X^*_p,\\
&X_2(UaU^*\otimes id_{\frac{n}{k}})X^*_2=X_1(a\otimes
id_{\frac{n}{k}})X^*_1,\\
&,...,\\
&X_p(UaU^*\otimes id_{\frac{n}{k}})X^*_p=X_{p-1}(a\otimes
id_{\frac{n}{k}})X^*_{p-1}.
\end{align*}
This implies that
\begin{align*}
&X^*_pX_1(U\otimes id_{\frac{n}{k}})(a\otimes
id_{\frac{n}{k}})=(a\otimes
id_{\frac{n}{k}})X^*_pX_1(U\otimes id_{\frac{n}{k}}),\\
&X^*_1X_2(U\otimes id_{\frac{n}{k}})(a\otimes
id_{\frac{n}{k}})=(a\otimes
id_{\frac{n}{k}})X^*_1X_2(U\otimes id_{\frac{n}{k}}),\\
&,...,\\
&X^*_{p-1}X_p(U\otimes id_{\frac{n}{k}})(a\otimes
id_{\frac{n}{k}})=(a\otimes id_{\frac{n}{k}})X^*_{p-1}X_p(U\otimes
id_{\frac{n}{k}}).
\end{align*} Similarly, we also have:
\begin{align*}
&Y^*_pY_1(U\otimes id_{\frac{n}{k}})(a\otimes
id_{\frac{n}{k}})=(a\otimes
id_{\frac{n}{k}})Y^*_pY_1(U\otimes id_{\frac{n}{k}}),\\
&Y^*_1Y_2(U\otimes id_{\frac{n}{k}})(a\otimes
id_{\frac{n}{k}})=(a\otimes
id_{\frac{n}{k}})Y^*_1Y_2(U\otimes id_{\frac{n}{k}}),\\
&,...,\\
&Y^*_{p-1}Y_p(U\otimes id_{\frac{n}{k}})(a\otimes
id_{\frac{n}{k}})=(a\otimes id_{\frac{n}{k}})Y^*_{p-1}Y_p(U\otimes
id_{\frac{n}{k}}).
\end{align*} Our goal is to find a unitary $W=(W_1,...,W_p)\in B^{\beta_n}_n$,
such that $\phi_k=AdW\circ\psi_k$. Note that $W\in B^{\beta_n}_n$
means that $(W_1,W_2,...,W_p)=(W_p,W_1,...,W_{p-1})$, namely,
$W_1=W_2=...=W_p$.

Set \begin{align*} &L_1=X^*_pX_1(U\otimes id_{\frac{n}{k}}),
N_1=Y^*_pY_1(U\otimes id_{\frac{n}{k}}),\\
&,...,\\
&L_p=X^*_{p-1}X_p(U\otimes id_{\frac{n}{k}}),
N_p=Y^*_{p-1}Y_p(U\otimes id_{\frac{n}{k}}),
\end{align*} then by the calculation above, all of these $L_i,N_i$
commute with $a\otimes id_{\frac{n}{k}}$, $\forall a\in \M_k$. Then
$N_pL^*_p=Y^*_{p-1}Y_pX^*_pX_{p-1}$, which implies
$X_pY^*_p=X_{p-1}L_pN^*_pY^*_{p-1}$. Moreover, \begin{align*}
X_{p-2}L_{p-1}L_pN^*_pN^*_{p-1}Y^*_{p-2}&=X_{p-2}X^*_{p-2}X_{p-1}(U\otimes
id_{\frac{n}{k}})L_pN^*_p(U^*\otimes
id_{\frac{n}{k}})Y^*_{p-1}Y_{p-2}Y^*_{p-2}\\
&=X_{p-1}L_pN^*_pY^*_{p-1}.\end{align*} Similarly, we have
\begin{align*}
&X_{p-3}L_{p-2}L_{p-1}L_pN^*_pN^*_{p-1}N^*_{p-2}Y^*_{p-3}=X_{p-2}L_{p-1}L_pN^*_pN^*_{p-1}Y^*_{p-2},\\
&,...,\\
&X_1L_2...L_pN^*_p...N^*_2Y^*_1=X_2L_3...L_pN^*_p...N^*_3Y^*_2.
\end{align*} So all of these terms equal to
$X_{p-1}L_pN^*_pY^*_{p-1}=X_pY^*_p$.

Put
$$W=(X_1L_2...L_pN^*_p...N^*_2Y^*_1,X_2L_3...L_pN^*_p...N^*_3Y^*_2,...,X_pY^*_p),$$
then $W\in B^{\beta_n}_n$, and $\phi_k=AdW\circ\psi_n$, since for
each $i=1,...,p-1$, we have
$$X_{i}L_{i+1}...L_pN^*_p...N^*_{i+1}Y^*_{i}Y_{i}(a\otimes
id_{\frac{n}{k}})Y^*_{i}Y_{i}N_{i+1}...N_pL^*_p...L^*_{i+1}X^*_{i}=X_i(a\otimes
id_{\frac{n}{k}})X^*_i.$$

(3). $A_k=\underbrace{\textrm{M}_k\oplus...\oplus \textrm{M}_k}_{p},
B_n=\M_n$.

Let $V$ be the unitary which implements the action $\beta_n$,
namely, $\rho(a)=VaV^*$, for all $a\in\M_n$. Let $X,Y$ be unitaries
in $\M_n$ such that
\begin{align*}
&\phi(a_1,a_2,...,a_p)=Xdiag(a_1\otimes id_{\frac{n}{pk}},a_2\otimes
id_{\frac{n}{pk}},...,a_p\otimes id_{\frac{n}{pk}})X^*,\\
&\psi(a_1,a_2,...,a_p)=Ydiag(a_1\otimes id_{\frac{n}{pk}},a_2\otimes
id_{\frac{n}{pk}},...,a_p\otimes id_{\frac{n}{pk}})Y^*.
\end{align*} for all $(a_1,a_2,...,a_p)\in\underbrace{\textrm{M}_k\oplus...\oplus
\textrm{M}_k}_{p}$, This is the case since each $\phi_{*}$ and
$\psi_{*}$ intertwines the actions, and $\phi_{k*}=\psi_{k*}$.

Since each $\phi_k$ and $\psi_k$ intertwines the actions,  we obtain
that:
$$X\left(
     \begin{array}{cccc}
       a_p\otimes id_{\frac{n}{pk}} &  &  &  \\
        & a_1\otimes id_{\frac{n}{pk}} &  &  \\
        &  & \ddots &  \\
        &  &  & a_{p-1}\otimes id_{\frac{n}{pk}} \\
     \end{array}
   \right)
X^*$$ $$=VX\left(
     \begin{array}{cccc}
       a_1\otimes id_{\frac{n}{pk}} &  &  &  \\
        & a_2\otimes id_{\frac{n}{pk}} &  &  \\
        &  & \ddots &  \\
        &  &  & a_{p}\otimes id_{\frac{n}{pk}} \\
     \end{array}
   \right)X^*V^*.$$  and
$$Y\left(
     \begin{array}{cccc}
       a_p\otimes id_{\frac{n}{pk}} &  &  &  \\
        & a_1\otimes id_{\frac{n}{pk}} &  &  \\
        &  & \ddots &  \\
        &  &  & a_{p-1}\otimes id_{\frac{n}{pk}} \\
     \end{array}
   \right)
Y^*$$ $$=VY\left(
     \begin{array}{cccc}
       a_1\otimes id_{\frac{n}{pk}} &  &  &  \\
        & a_2\otimes id_{\frac{n}{pk}} &  &  \\
        &  & \ddots &  \\
        &  &  & a_{p}\otimes id_{\frac{n}{pk}} \\
     \end{array}
   \right)Y^*V^*.$$ Set $P=\left(
                             \begin{array}{cccc}
                                &  &  & I_{\frac{n}{k}} \\
                               I_{\frac{n}{k}} &  &  &  \\
                                & \ddots &  &  \\
                                &  & I_{\frac{n}{k}} &  \\
                             \end{array}
                           \right),
   $ then $$P\left(
     \begin{array}{cccc}
       a_p\otimes id_{\frac{n}{pk}} &  &  &  \\
        & a_1\otimes id_{\frac{n}{pk}} &  &  \\
        &  & \ddots &  \\
        &  &  & a_{p-1}\otimes id_{\frac{n}{pk}} \\
     \end{array}
   \right)P^*$$
\noindent $$=\left(
     \begin{array}{cccc}
       a_1\otimes id_{\frac{n}{pk}} &  &  &  \\
        & a_2\otimes id_{\frac{n}{pk}} &  &  \\
        &  & \ddots &  \\
        &  &  & a_{p}\otimes id_{\frac{n}{pk}} \\
     \end{array}
   \right). $$ So we have: $$XP^*P\left(
     \begin{array}{cccc}
       a_p\otimes id_{\frac{n}{pk}} &  &  &  \\
        & a_1\otimes id_{\frac{n}{pk}} &  &  \\
        &  & \ddots &  \\
        &  &  & a_{p-1}\otimes id_{\frac{n}{pk}} \\
     \end{array}
   \right)P^*PX^*$$ $$=VX\left(
     \begin{array}{cccc}
       a_1\otimes id_{\frac{n}{pk}} &  &  &  \\
        & a_2\otimes id_{\frac{n}{pk}} &  &  \\
        &  & \ddots &  \\
        &  &  & a_{p}\otimes id_{\frac{n}{pk}} \\
     \end{array}
   \right)X^*V^*,$$ namely, $$XP^*\left(
     \begin{array}{cccc}
       a_1\otimes id_{\frac{n}{pk}} &  &  &  \\
        & a_2\otimes id_{\frac{n}{pk}} &  &  \\
        &  & \ddots &  \\
        &  &  & a_{p}\otimes id_{\frac{n}{pk}} \\
     \end{array}
   \right)PX^*$$ $$=VX\left(
     \begin{array}{cccc}
       a_1\otimes id_{\frac{n}{pk}} &  &  &  \\
        & a_2\otimes id_{\frac{n}{pk}} &  &  \\
        &  & \ddots &  \\
        &  &  & a_{p}\otimes id_{\frac{n}{pk}} \\
     \end{array}
   \right)X^*V^*,$$ so $$X^*V^*XP^*\left(
     \begin{array}{cccc}
       a_1\otimes id_{\frac{n}{pk}} &  &  &  \\
        & a_2\otimes id_{\frac{n}{pk}} &  &  \\
        &  & \ddots &  \\
        &  &  & a_{p}\otimes id_{\frac{n}{pk}} \\
     \end{array}
   \right)$$ $$=\left(
     \begin{array}{cccc}
       a_1\otimes id_{\frac{n}{pk}} &  &  &  \\
        & a_2\otimes id_{\frac{n}{pk}} &  &  \\
        &  & \ddots &  \\
        &  &  & a_{p}\otimes id_{\frac{n}{pk}} \\
     \end{array}
   \right)X^*V^*XP^*,$$ similarly, $$Y^*V^*YP^*\left(
     \begin{array}{cccc}
       a_1\otimes id_{\frac{n}{pk}} &  &  &  \\
        & a_2\otimes id_{\frac{n}{pk}} &  &  \\
        &  & \ddots &  \\
        &  &  & a_{p}\otimes id_{\frac{n}{pk}} \\
     \end{array}
   \right)$$ $$=\left(
     \begin{array}{cccc}
       a_1\otimes id_{\frac{n}{pk}} &  &  &  \\
        & a_2\otimes id_{\frac{n}{pk}} &  &  \\
        &  & \ddots &  \\
        &  &  & a_{p}\otimes id_{\frac{n}{pk}} \\
     \end{array}
   \right)Y^*V^*YP^*.$$ Put $L=X^*V^*XP^*$, $N=Y^*V^*YP^*$, then
   $$L=diag(L_1,...,L_p), N=diag(N_1,...,N_p),$$ where each $L_i$ and $N_i$ belongs to $I_{k}\otimes \M_{\frac{n}{pk}}$, which means that each of them commutes
   with any matrix in $\M_{k}\otimes id_{\frac{n}{pk}}$. Hence, $$X^*V^*X=LP=\left(
                             \begin{array}{cccc}
                                &  &  & L_{1} \\
                               L_{2} &  &  &  \\
                                & \ddots &  &  \\
                                &  & L_{p} &  \\
                             \end{array}
                           \right),$$ and $$Y^*V^*Y=NP=\left(
                             \begin{array}{cccc}
                                &  &  & N_{1} \\
                               N_{2} &  &  &  \\
                                & \ddots &  &  \\
                                &  & N_{p} &  \\
                             \end{array}
                           \right).$$ Since $V^{p}=I$, we obtain $$\left(
                             \begin{array}{cccc}
                                &  &  & L_{1} \\
                               L_{2} &  &  &  \\
                                & \ddots &  &  \\
                                &  & L_{p} &  \\
                             \end{array}
                           \right)^p=I, \left(
                             \begin{array}{cccc}
                                &  &  & N_{1} \\
                               N_{2} &  &  &  \\
                                & \ddots &  &  \\
                                &  & N_{p} &  \\
                             \end{array}
                           \right)^p=I.$$ Form this we have that
                           \begin{align*}
                           &L_{1}L_{p}...L_{2}=I,\\
                           &L_{2}L_{1}...L_{3}=I,\\
                           &,...,\\
                           &L_{p}L_{p-1}...L_{1}=I,
                           \end{align*} similarly,
                           \begin{align*}
                           &N_{1}N_{p}...N_{2}=I,\\
                           &N_{2}N_{1}...N_{3}=I,\\
                           &,...,\\
                           &N_{p}N_{p-1}...N_{1}=I.
                           \end{align*} Set $$Z=\left(
                                         \begin{array}{ccccc}
                                           N_{1}L_{p}...L_{2} &  &  &  &  \\
                                            & N_{2}N_{1}L_{p}...L_{3} &  &  &  \\
                                            &  & \ddots &  &  \\
                                            &  &  & N_{p-1}...N_1L_{p} &  \\
                                            &  &  &  & I \\
                                         \end{array}
                                       \right),
                           $$ By the relations above, we
                           have that $$Z\left(
                             \begin{array}{cccc}
                                &  &  & L_{1} \\
                               L_{2} &  &  &  \\
                                & \ddots &  &  \\
                                &  & L_{p} &  \\
                             \end{array}
                           \right)Z^*=\left(
                             \begin{array}{cccc}
                                &  &  & N_{1} \\
                               N_{2} &  &  &  \\
                                & \ddots &  &  \\
                                &  & N_{p} &  \\
                             \end{array}
                           \right),$$ namely, $ZX^*V^*XZ^*=Y^*V^*Y$.
                           Put $W=XZ^*Y^*$, then $WV=VW$, so $W\in
                           B^{\beta_n}_{n}$, and $\phi_k=AdW\circ
                           \psi_n$.

(4). $A_k=\underbrace{\textrm{M}_k\oplus...\oplus \textrm{M}_k}_{p},
B_n=\underbrace{\textrm{M}_n\oplus...\oplus \textrm{M}_n}_{p}$.

Since $\phi_k$ intertwines the actions $\alpha_k$ and $\beta_n$, by
calculation, $$\phi_{k*}=\left(
                                                             \begin{array}{cccc}
                                                               l_{11} & l_{12} & \ldots & l_{1p} \\
                                                               l_{1p} & l_{11} & \ldots & l_{1p-1} \\
                                                               \cdots & \cdots &  \cdots& \cdots \\
                                                               l_{12} & l_{13} & \ldots & l_{11} \\
                                                             \end{array}
                                                           \right),$$ where $(l_{11}+l_{12}+...+l_{1p})k=n$, and $\phi_k$ is of the following form: $$
\phi_k(a_1,a_2,...,a_p)=(\phi_1(.),\phi_2(.),...,\phi_p(.)), \forall
(a_1,a_2,...,a_p)\in \underbrace{\textrm{M}_n\oplus...\oplus
\textrm{M}_n}_{p},$$ where $(.)$ is the abbreviation of
$(a_1,a_2,...,a_p)$, and
\begin{align*}
&\phi_1(a_1,a_2,...,a_p)=Xdiag(a_1\otimes id_{l_{11}},a_2\otimes
id_{l_{12}},...,a_p\otimes id_{l_{1p}})X^*,\\
&\phi_2(a_1,a_2,...,a_p)=Xdiag(a_2\otimes id_{l_{11}},a_3\otimes
id_{l_{12}},...,a_1\otimes id_{l_{1p}})X^*,\\
&,...,\\
&\phi_p(a_1,a_2,...,a_p)=Xdiag(a_p\otimes id_{l_{11}},a_1\otimes
id_{l_{12}},...,a_{p-1}\otimes id_{l_{1p}})X^*,
\end{align*} here $X$ is a unitary in $\M_n$. Since
$\phi_{k*}=\psi_{k*}$, similarly, $$
\psi_k(a_1,a_2,...,a_p)=(\psi_1(.),\psi_2(.),...,\psi_p(.)), \forall
(a_1,a_2,...,a_p)\in \underbrace{\textrm{M}_n\oplus...\oplus
\textrm{M}_n}_{p},$$ where $(.)$ is the abbreviation of
$(a_1,a_2,...,a_p)$, and
\begin{align*}
&\psi_1(a_1,a_2,...,a_p)=Ydiag(a_1\otimes id_{l_{11}},a_2\otimes
id_{l_{12}},...,a_p\otimes id_{l_{1p}})Y^*,\\
&\psi_2(a_1,a_2,...,a_p)=Ydiag(a_2\otimes id_{l_{11}},a_3\otimes
id_{l_{12}},...,a_1\otimes id_{l_{1p}})Y^*,\\
&,...,\\
&\psi_p(a_1,a_2,...,a_p)=Ydiag(a_p\otimes id_{l_{11}},a_1\otimes
id_{l_{12}},...,a_{p-1}\otimes id_{l_{1p}})Y^*,
\end{align*} here $Y$ is a unitary in $\M_n$.

Put $W=(XY^*,...,XY^*)\in B^{\beta_n}_n$, then it is clear that
$\phi_k=AdW\circ\psi_k.$
\end{proof}
\begin{cor}\label{cor2} Let $\phi_k$ and $\psi_k$ be two homomorphisms from the finite dimensional \C-dynamical system $(A_k, \alpha_k, Z_p)$ to $(B_n, \beta_n,
Z_p)$. Denote by $\widetilde{\phi}_k$ and $\widetilde{\psi}_k$ the
morphisms from $A_k\rtimes_{\alpha_k}Z_p$ to
$B_n\rtimes_{\beta_n}Z_p$ induced by $\phi_k$ and $\psi_k$,
respectively. If $\phi_{k*}=\psi_{k*}$ and
$\widetilde{\phi}_{k*}=\widetilde{\psi}_{k*}$, then there exists a
unitary $W$ in $B^{\beta_n}_n$, the fixed point subalgebra of $B_n$,
such that $\phi_k=AdW\circ\psi_k$.
\end{cor}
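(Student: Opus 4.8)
The plan is to reduce to the irreducible case just proved (the preceding theorem) by decomposing both systems into irreducible pieces. By the discussion opening Section~\ref{sect2}, write $A_k=\bigoplus_{i}A_k^{(i)}$ and $B_n=\bigoplus_{j}B_n^{(j)}$ with each summand an invariant ideal carrying an irreducible action, hence of one of the two types $(\M_m,\mathrm{Ad}(V))$ or $\M_m^{\oplus p}$ with the cyclic shift. Since the units $1_{A_k^{(i)}}$ are central and $\alpha_k$-fixed, $\phi_k$ and $\psi_k$ split as $\phi_k=\bigoplus_{i,j}\phi_k^{ij}$ and $\psi_k=\bigoplus_{i,j}\psi_k^{ij}$ with $\phi_k^{ij},\psi_k^{ij}\colon A_k^{(i)}\to B_n^{(j)}$ equivariant (possibly $0$). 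Correspondingly $\K_0(A_k)=\bigoplus_i\K_0(A_k^{(i)})$, and — using the identifications $\M_m\rtimes Z_p\cong\M_m^{\oplus p}$ and $\M_m^{\oplus p}\rtimes Z_p\cong\M_{pm}$ of Section~\ref{sect2} — also $\K_0(A_k\rtimes_{\alpha_k}Z_p)=\bigoplus_i\K_0(A_k^{(i)}\rtimes_{\alpha_k}Z_p)$; likewise for $B_n$. Thus $\phi_{k*}$, $\widetilde{\phi}_{k*}$ (and those of $\psi_k$) decompose into $(i,j)$-components, the $(i,j)$-component being induced by $\phi_k^{ij}$, resp.\ by the crossed-product extension of $\phi_k^{ij}$.

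\textbf{Step 1 (aligning the ranges of the units).} Fix $j$. The projections $p_i:=\phi_k^{ij}(1_{A_k^{(i)}})$ are mutually orthogonal, $\beta_n$-invariant, and sum to $1_{B_n^{(j)}}$ by unitality of $\phi_k$; likewise the $q_i:=\psi_k^{ij}(1_{A_k^{(i)}})$. I claim that $p_i$ and $q_i$ are equivalent by a partial isometry in $(B_n^{(j)})^{\beta_n}$ for every $i$. Indeed, the explicit \K-theoretic bookkeeping of Section~\ref{sect2} (and of the existence theorem of Section~\ref{sect3}) shows that the $(i,j)$-component of $\widetilde{\phi}_{k*}$ records the full equivariant multiplicity datum of $\phi_k^{ij}$: when $B_n^{(j)}$ is of matrix type, the list of ranks of the compressions of $p_i$ to the spectral projections of the unitary implementing $\beta_n$; when $B_n^{(j)}$ is of shift type, the common rank of the $p$ coordinates of $p_i$. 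Since $\widetilde{\phi}_{k*}=\widetilde{\psi}_{k*}$, this datum agrees for $p_i$ and $q_i$, which is precisely equivariant Murray--von Neumann equivalence of projections in $B_n^{(j)}$. Summing implementing partial isometries over $i$ yields a unitary $U_j\in(B_n^{(j)})^{\beta_n}$ with $U_jq_iU_j^*=p_i$ for all $i$; replacing $\psi_k$ by $\mathrm{Ad}\big(\bigoplus_jU_j\big)\circ\psi_k$ — an equivariant inner perturbation, which alters neither $\psi_{k*}$ nor $\widetilde{\psi}_{k*}$ — we may assume $p_i=q_i$ for all $i,j$.

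\textbf{Step 2 (the irreducible case on corners, and assembly).} Fix $(i,j)$ with $\phi_k^{ij}\neq 0$. As $p_i$ is $\beta_n$-invariant, the corner $p_iB_n^{(j)}p_i$ with the restricted action is again an irreducible finite-dimensional \C-dynamical system (of matrix type if $B_n^{(j)}$ is, of shift type if $B_n^{(j)}$ is), and $\phi_k^{ij},\psi_k^{ij}$ are now \emph{unital} equivariant homomorphisms from $A_k^{(i)}$ into it. Restricting the two hypotheses to the $(i,j)$-component gives $(\phi_k^{ij})_*=(\psi_k^{ij})_*$ on $\K_0$ and on $\K_0$ of the corner crossed product, so by the preceding theorem there is a unitary $W_{ij}\in(p_iB_n^{(j)}p_i)^{\beta_n}=p_i(B_n^{(j)})^{\beta_n}p_i$ with $\phi_k^{ij}=\mathrm{Ad}(W_{ij})\circ\psi_k^{ij}$. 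Putting $W_{ij}:=p_i$ when $\phi_k^{ij}=0$, the element $W_j:=\sum_iW_{ij}$ is a unitary in $(B_n^{(j)})^{\beta_n}$ with $\mathrm{Ad}(W_j)\circ\psi_k^{ij}=\phi_k^{ij}$ for every $i$, hence $\mathrm{Ad}(W_j)$ carries $\psi_k$ to $\phi_k$ in the $j$-th coordinate. Then $W:=\bigoplus_jW_j\in B_n^{\beta_n}$ satisfies $\mathrm{Ad}(W)\circ\psi_k=\phi_k$ for the $\psi_k$ of Step~1; composing $W$ with the unitary of Step~1 produces the required element of $B_n^{\beta_n}$ conjugating the original $\psi_k$ to $\phi_k$.

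The only non-routine point is the claim in Step~1. Agreement of the classes $[1_{A_k^{(i)}}]$ in crossed-product $\K_0$ alone does \emph{not} suffice — the abstract equivalence $[p_i]=[q_i]$ already holds in $\K_0(B_n^{(j)})$; it is the equality of the \emph{full} induced maps $\widetilde{\phi}_{k*}=\widetilde{\psi}_{k*}$, i.e.\ the record of eigenspaces carried by the dual action (exactly the phenomenon underlying cases~(1)--(4) of the preceding theorem and of the existence theorem), that forces equivariant equivalence. Without it the statement is false: the two obvious unital equivariant maps $\mathbb{C}^2\to(\M_2,\mathrm{Ad}(\mathrm{diag}(1,-1)))$ agree on $\K_0$ of the algebras but not on $\K_0$ of the crossed products, and are not conjugate by a fixed-point unitary. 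The same reduction, with the existence theorem replacing the preceding theorem, also proves Corollary~\ref{cor1}.
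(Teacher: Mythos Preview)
The paper supplies no proof for this corollary (nor for the parallel Corollary~\ref{cor1}); both are stated immediately after the irreducible theorems and left to the reader. Your reduction is the natural way to fill this in and is correct.

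A few remarks on points you handled well and one place where the argument could be tightened. First, you are right that the preceding theorem, as actually \emph{proved}, only treats the unital case (the multiplicities $n/k$, $n/(pk)$ are used throughout), so the corner reduction in Step~2 is genuinely needed and not mere bookkeeping. Second, your observation that equality of $[\,1_{A_k^{(i)}}\,]$ alone in crossed-product $\K_0$ is insufficient is exactly the point: under Lemma~2.1 the inclusion $\M_n\hookrightarrow\M_n^{\oplus p}$ sends a $\beta_n$-invariant projection $p$ to $(p,\dots,p)$, recording only its rank. What recovers the equivariant class of $p_i=\phi_k^{ij}(1)$ is that the special element $\zeta'$ of the corner $p_iB_n^{(j)}p_i$ (namely the list of eigen-multiplicities of $p_iVp_i$) equals $\widetilde{\phi}_{k*}^{ij}(\zeta)$, where $\zeta$ is the special element of $A_k^{(i)}$; this is the content of the computations in cases~(1)--(4) of the existence theorem. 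Since $\widetilde{\phi}_{k*}^{ij}=\widetilde{\psi}_{k*}^{ij}$, the corners $p_iB_n^{(j)}p_i$ and $q_iB_n^{(j)}q_i$ have the same special element, hence $p_i$ and $q_i$ are equivariantly equivalent. You assert this but could make the mechanism (special element of the corner $=$ image of $\zeta$) explicit. Third, in Step~2 you should note that the corner inclusion $p_iB_n^{(j)}p_i\hookrightarrow B_n^{(j)}$ (and its crossed-product analogue) induces an injection on $\K_0$, so the hypotheses do descend to the corner; this is immediate from the identifications in Section~\ref{sect2} but worth a sentence.

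Your closing example and the remark that the same reduction proves Corollary~\ref{cor1} are both apt.
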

\section{classification}\label{sect5}
In this section, we prove the classification \textbf{Theorem}
\ref{main} by the equivariant Elliott's intertwining argument.
\begin{proof} First of all, by standard argument, the K-theoretic invariants of the AF \C-dynamical systems can be lifted to finite stages, namely, by passing
to subsequences and changing notation, we could obtain the following
intertwinings: $$ \xymatrix{
(\textrm{K}_0(A_1),\alpha_{1_*}) \ar[r] \ar[d] & (\textrm{K}_0(A_2),\alpha_{2_*}) \ar[r] \ar[d] & \cdots \ar[r] & (\textrm{K}_0(A), \alpha_{*}) \ar@{<=>}[d]\\
(\textrm{K}_0(B_1),\beta_{1_*}) \ar[r] \ar[ur] &
(\textrm{K}_0(B_2),\beta_{2_*}) \ar[r] \ar[ur] & \cdots \ar[r] &
(\textrm{K}_0(B),\beta_{*}) }
$$ and $$
\xymatrix{
(\textrm{K}_0(A_1\rtimes_{\alpha_1}Z_p),\hat{\alpha}_{1_*}) \ar[r] \ar[d] & (\textrm{K}_0(A_2\rtimes_{\alpha_2}Z_p),\hat{\alpha}_{2_*}) \ar[r] \ar[d] & \cdots \ar[r] & (\textrm{K}_0(A\rtimes_{\alpha}Z_p), \hat{\alpha}_{*}) \ar@{<=>}[d]\\
(\textrm{K}_0(B_1\rtimes_{\beta_1}Z_p),\hat{\beta}_{1_*}) \ar[r]
\ar[ur] & (\textrm{K}_0(B_2\rtimes_{\beta_2}Z_p),\hat{\beta}_{2_*})
\ar[r] \ar[ur] & \cdots \ar[r] &
(\textrm{K}_0(B\rtimes_{\beta}Z_p),\hat{\beta}_{*}). }
$$
Second, we would like to make the two intertwinings above to be
compatible. Note that we have that: $$
  \begin{array}{ccccccccc}
    \textrm{K}_0(A_1)&\hskip-1em \rightarrow &\hskip-1em \textrm{K}_0(A) &\hskip-1em \cong &\hskip-1em \textrm{K}_0(B) &\hskip-1em \leftarrow &\hskip-1em \cdots & \hskip-1em \leftarrow & \hskip-1em\textrm{K}_0(B_1) \\
    \downarrow &  &\hskip-1em \downarrow &  & \hskip-1em\downarrow &  &  &  & \hskip-1em\downarrow \\
    \textrm{K}_0(A_1\rtimes_{\alpha_1}Z_p) & \hskip-1em\rightarrow &\hskip-1em \textrm{K}_0(A\rtimes_{\alpha}Z_p) & \hskip-1em\cong &\hskip-1em \textrm{K}_0(B\rtimes_{\beta}Z_p) & \hskip-1em\leftarrow & \hskip-1em\cdots & \hskip-1em\leftarrow & \hskip-1em\textrm{K}_0(B_1\rtimes_{\beta}Z_p) \\
  \end{array},
$$ Hence, there exists $n$, such that the following diagram $$\xymatrix{
\textrm{K}_0(A_1) \ar[r] \ar[d] & \textrm{K}_0(B_n) \ar[d]\\
\textrm{K}_0(A_1\rtimes_{\alpha_1}Z_p \ar [r]) &
\textrm{K}_0(B_n\rtimes_{\beta_n}Z_p)
 }$$ commutes. After reindexing, the two intertwinings above could
 satisfy the following commutative diagrams:$$\xymatrix{\textrm{K}_0(A_n) \ar[r] \ar[d] & \textrm{K}_0(B_n) \ar[d]\\
\textrm{K}_0(A_n\rtimes_{\alpha_n}Z_p \ar [r]) &
\textrm{K}_0(B_n\rtimes_{\beta_n}Z_p),}$$ and $$\xymatrix{\textrm{K}_0(B_n) \ar[r] \ar[d] & \textrm{K}_0(A_{n+1}) \ar[d]\\
\textrm{K}_0(B_n\rtimes_{\beta_n}Z_p \ar [r]) &
\textrm{K}_0(A_{n+1}\rtimes_{\alpha_{n+1}}Z_p).}$$ Also, these
intertwinings can preserve the special elements and the units.

Now, we can apply the local existence and the local uniqueness
results on finite stages. By Corollary \ref{cor1}, we can lift each
morphism of the invariant to a morphism between the dynamical
systems. By Corollary \ref{cor2}, we can correct each morphism by an
inner morphism commuting with the actions, so we obtain an
intertwining of the dynamical systems: $$ \xymatrix{
(A_1,\alpha_1) \ar[r] \ar[d] & (A_2,\alpha_2) \ar[r] \ar[d] & \cdots \ar[r] & (A, \alpha) \ar@{-->} [d]^{\psi} \\
(B_1,\beta_1) \ar[r] \ar[ur] & (B_2,\beta_2) \ar[r] \ar[ur] & \cdots
\ar[r] & (B,\beta). }$$ Hence, $(A,\alpha)$ and $(B,\beta)$ are
isomorphic by an isomorphism $\psi$ which induces $F$ and $\phi$.
\end{proof}

An immediate corollary of this theorem is the classification of
inductive limit actions of $Z_p$ on AF algebras.

\begin{cor} Let $A$ be an AF algebra, and $\alpha, \beta$ be two
inductive limit group actions of $Z_p$ on $A$, then $\alpha$ and
$\beta$ are conjugate each other if and only if \begin{align*}
&(K_0(A), [1_A], \alpha_*)\cong(K_0(A), [1_A], \beta_*)\\
&(K_0(A\rtimes_{\alpha}Z_p), \zeta_{\alpha},
\hat{\alpha}_{*})\cong(K_0(A\rtimes_{\beta}Z_p), \zeta_{\beta},
\hat{\beta}_{*})
\end{align*} and these two isomorphisms are compatible with natural embedding of the K-theory of the algebra to the K-theory of the cross product.
\end{cor}

\proof[Acknowledgements] This paper was initiated while the first
author was a postdoctoral fellow at the Fields Institute; and was
completed after he moved to the Research Center for Operator
Algebras, East China Normal University (ECNU). He is very grateful
to Professor George A. Elliott for his support and to the Fields
Institute for its hospitality; he also appreciates Professor Huaxin
Lin and ECNU's support. The second author is supported by a NSFC grant in China.


\begin{thebibliography}{}
\bibitem{Bla} B. Blackadar, \emph{K-theory for operator algebras.}
Mathematical Sciences Research Institute Publications, 5,
Springer-Verlag, New York, 1986.



\bibitem{BEEK} O. Bratteli, G. A. Elliott, D. E. Evans and A. Kishimoto, \emph{on the classification of inductive limits of inner actions of a compact group.} Current topics in opertor algebras, Nara,
1990, 13--24, World Scientic Publishing, River Edge, New Jersey,
1991.

\bibitem {Ell} G. A. Elliott, \emph{On the classification of induvtive limits of sequences of semisimple finite-dimensional algebras.} J. Algebra 38 (1976), 29--44.

\bibitem {ES} G. A. Elliott and H. Su, \emph{\K-theoretic classification for inductive limit $Z_2$ actions on AF algebras.} Canad. J. Math. 48 (1996), 946--958.




\bibitem {HR} D. E. Handelman and W. Rossmann, \emph{Actions of compact groups on AF \Ca s.}
 Illinois J. Math. 29 (1985), 51--95.

\bibitem {HWZ} I. Hirshberg, W. Winter, and J. Zacharias, \emph{Rokhlin dimension and
C*-dynamics.} preprint arXiv: 1209.1618v1.

\bibitem {izumi1} M. Izumi, \emph{Finite group actions on C*-algebras with the Rokhlin property I.} Duke Math. 1222 (2004), no. 2, 233--280.

\bibitem {izumi2} M. Izumi, \emph{Finite group actions on C*-algebras with the Rokhlin property II.} Adv. Math. 184 (2004), no. 1, 119--160.

\bibitem {K} A. Kishimoto, \emph{Actions of finite groups on AF \Ca .} Internat. J. Math. 1 (1990), 267--292.

\bibitem {Phi} N. C. Phillips, \emph{Finite cyclic group actions with the tracial Rokhlin
property.} Trans. Amer. Math. Soc., to appear (arXiv: math.
OA/0609785).

\bibitem {Phi1} N. C. Phillips, \emph{The tracial Rokhlin property is
generic.} preprint arXiv: 1209.3859v1.

\bibitem{Su} H. Su, \emph{\K-theoretic classification for certain indutive limit $Z_2$ actions on real rank zero \Ca s.} Trans. Amer. Math. Soc. 348 (1996), no. 10, 4199--4230.








\end{thebibliography}
\end{document}